\theoremstyle{plain}
\newtheorem{thm}{Theorem}[section]
\newtheorem{lem}[thm]{Lemma}
\newtheorem{prop}[thm]{Proposition}
\newtheorem{defn}[thm]{Definition}
\newtheorem{rem}[thm]{Reamrk}
\DeclareMathOperator{\sgn}{sign}
\numberwithin{equation}{section}
\def\ff{u}
\def\vv{U}
\def\kk{\Psi}
\def\bb{f}
\def\mm{M}
\def\tf{\varsigma}
\begin{document}
	\begin{center}
		\Large{\textbf{Mass-Conserving Self-Similar Solutions to Collision-Induced Breakage Equations}}
	\end{center}

	\medskip
	\medskip
	\centerline{${\text{ ${\text{Ram Gopal~ Jaiswal$^{\dagger}$}}$ and  Ankik Kumar Giri$^{\dagger*}$}}$}\let\thefootnote\relax\footnotetext{$^{*}$Corresponding author. Tel +91-1332-284818 (O);  Fax: +91-1332-273560  \newline{\it{${}$ \hspace{.3cm} Email address: }}ankik.giri@ma.iitr.ac.in}
	\medskip
	{\footnotesize

		\centerline{ ${}^{}$  $\dagger$ Department of Mathematics, Indian Institute of Technology Roorkee,}
		\centerline{Roorkee-247667, Uttarakhand, India}
		
	}

	\bigskip

\begin{quote}
	{\small {\em \bf Abstract.} 
		Existence of mass-conserving self-similar solutions to collision-induced breakage equation is shown for a specific class of homogeneous collision kernels and breakage functions. The proof mainly relies on a dynamical approach and compactness method to constructing mass-conserving stationary solutions for an evolution problem, which induces mass-conserving self-similar solutions to collision-induced breakage equation. Furthermore, we also determine lower and upper bound of the scaling profile.}
\end{quote}

		\vspace{0.5cm}

	\textbf{Keywords.} Collision-induced breakage, mass-conservation, stationary solution,  self-similar profile, large-time behaviour \\
	
	\textbf{AMS subject classifications.} 35B40, 35C06, 35D30, 37C25, 45K05
	
	\section{Introduction}\label{sec:intro}
The collision-induced/nonlinear breakage equation describes the dynamics of a system of an infinite number of particles that split into smaller fragments due to collisions. Let $\ff(t,{x})$ denote the density of particles of size $x\in (0,\infty)$ at time $t\ge0$. The evolution of $\ff$ is governed by the integro-partial differential equation provided by

\begin{subequations}\label{main_in_nonlinear_operator}
	\begin{align}
		\partial_{t} \ff (t,{x})&=\mathcal{N}(\ff (t,{x})), \qquad  (t,{x})\in (0,\infty)^2, \label{main}\\
		\ff (0,{x})&=\ff^{\mbox{\rm{\mbox{in}}}}({x})\geq0, \qquad {x}\in (0,\infty),  \label{in}
	\end{align}
	
	where
	\begin{align}
		\mathcal{N} \ff ({x}):=&\frac{1}{2}\int_{{x}}^{\infty}\int_{0}^{{y}}\bb ({x},{y}-{z},{z})\kk({y}-{z},{z})\ff ({y}-{z})\ff ({z})d{z} d {y} \nonumber \\
		&-\int_{0}^{\infty}\kk({x},{y})\ff ({x})\ff ({y})d{y}, \qquad x\in (0,\infty). \label{nonlinear_operator}
	\end{align}
\end{subequations}

In equation \eqref{main_in_nonlinear_operator}, the collision kernel $\kk({x},{y})=\kk({y},{x})\ge0$ for all $({x},{y})\in(0,\infty)^2$ gives the rate at which particles of sizes ${x}$ and ${y}$ collide. The breakage or daughter distribution function $\bb({z},{x},{y})=\bb({z},{y},{x})\ge0$ for all $({x},{y})\in(0,\infty)^2$   determines the distribution of daughter particles of size ${z}\in (0,{x}+{y})$ resulting from the collision of particles of sizes ${x}$ and ${y}$. The first term on the right-hand side of the equation signifies the creation of particles of size ${x}$ due to the collision between a pair of particles of sizes ${y}-{z}$ and ${z}$, where ${y}>{x}$ and ${z}\in(0,{y})$. The second term denotes the reduction in the number of particles of size ${x}$ due to collisions with particles of various sizes in the system.

Assuming that the local mass conservation holds, i.e.,
\begin{equation}
	\int_{0}^{{x}+{y}}{z}\bb ({z},{x},{y})d{z}={x}+{y} ~~\text{and}~~ \bb ({z},{x},{y})=0 ~\text{for}~ {z}\ge{x}+{y}, \label{local_conservation_mass_transfer}
\end{equation}
for all $({x},{y})\in (0,\infty)^2$, is a foundational property in collisional particle breakup modeling. This property ensures that mass is neither created nor destroyed during particles collisions, also leading to the principle of global mass conservation
\begin{equation}
	\int_{0}^{\infty}{x}\ff (t,{x})d{x}=\int_{0}^{\infty}{x}\ff^{\mbox{\rm{\mbox{in}}}}({x})d{x},  \qquad t\geq 0. \label{eq:globalc}
\end{equation}
The equation \eqref{eq:globalc} asserts that the total mass of particles at any time $t$ remains equal to the initial total mass of particles.

Moreover, from equation \eqref{local_conservation_mass_transfer}, combining particles of sizes ${x}$ and ${y}$ through collision cannot yield fragments larger than their sum, ${x} + {y}$. Nevertheless, it is reasonable to expect that collisional fragmentation between ${x}$ and ${y}$ might generate particles larger than ${x}$ and ${y}$ due to the mass transfer from smaller to larger particles.

However, if there exists a function $\bar{f}({z},{x},{y})\ge0$ for $({x},{y})\in(0,\infty)^2$ which determines the distribution of daughter particles of size ${z}\in (0,{x})$ resulting from the collision of particles of sizes ${x}$ and ${y}$ such that

\begin{equation}
	\bb ({z},{x},{y})=\bar{f} ({z},{x},{y})\textbf{1}_{(0,{x})}({z})+\bar{f} ({z},{y},{x})\textbf{1}_{(0,{y})}({z}) \quad \text{for}~~ ({x},{y})\in (0,\infty)^2, \label{no_masstransfer}
\end{equation} 
and 
\begin{equation}
	\int_{0}^{{x}}{z}\bar{f} ({z},{x},{y})d{z}={x} ~\text{and}~~ \bar{f} ({z},{x},{y})=0 ~\text{for}~ {z}\ge{x}, \label{local_conservation_no_mass_transfer}
\end{equation}
then mass transfer during collisions would be absent. 

The integrability of the non-negative function $\bar{f}$ is an important assumption, as it ensures that the number of daughter particles produced during the breakage of a particle of size ${x}$ is finite, which can be mathematically expressed as
\begin{equation}
	\mathrm{N}({x},{y}):=\int_0^{{x}} \bar{f} ({z},{x},{y})d{z} <\infty, ~ ~ ({x},{y})\in (0,\infty)^2. \label{eq:number_of_particles}
\end{equation}
A specific type of breakage function, known as power law breakage, is given by
\begin{equation}
	\bar{f} ({z},{x},{y})=(\nu+2){{z}}^\nu {{x}}^{-\nu-1}\textbf{1}_{(0,{x})}({z}), \label{eq:powerlaw}
\end{equation}
with $\nu\in (-2,0]$ \cite{ziff1987}, and satisfies the integrability assumption \eqref{eq:number_of_particles} when $\nu \in (-1,0]$, but fails to do so when $\nu \in (-2,-1]$.

\medskip

The predictive characteristics of collision-induced breakage equation \eqref{main} poses a central challenge in their application. At first glance, \eqref{main} without mass transfer merely result in the decay of particle sizes towards zero while conserving mass for all times. However, such general information must be more coarse to distinguish specific characteristics among different choices of collision kernel and breakage function. Therefore, a more precise investigation is needed, leading to the development of the dynamical scaling hypothesis. As stated in \cite{piskunov2010,piskunov2012}, the dynamical scaling hypothesis postulates a generalized behaviour of the system. According to this hypothesis, for sufficiently large times, the details of initial particle distribution become insignificant, and the dynamics is governed by the size parameter $e(t)$ of particles at time $t$ and a scaling profile $\eta$ such that the solution $\ff$ to equation \eqref{main_in_nonlinear_operator} assumes the approximate form
\begin{align}
	\ff(t,{x})\sim \frac{1}{e(t)^2}\eta\left(\frac{{x}}{e(t)}\right), \label{self-similar form}
\end{align}
for homogeneous collision kernel $\kk$ and breakage function $\bb$ satisfying
\begin{align}
	\kk(\zeta_1 {x},\zeta_1 {y})=\zeta_1^\lambda\kk({x},{y}),      \quad       \bb(\zeta_2 z,\zeta_2 {x},\zeta_2 {y})=\frac{1}{\zeta_2}\bb({z},{x},{y})    , \label{homogenous_kernels}
\end{align}
for    $(\zeta_1, \zeta_2, {z}, {x}, {y}) \in (0,\infty)^5$ and $\lambda \in (1,2]$.
\medskip

In the specific case where the collision kernel takes the form $\kk(x,y)=a(x)a(y)$, equation \eqref{main_in_nonlinear_operator} can be converted into the spontaneous/linear breakage equation. Over the past few decades, there has been a lot of interest in the linear fragmentation equation \cite{redner1990}, which was first investigated by Filippov \cite{Fil1961}, Kapur \cite{kapur1972}, McGrady and Ziff \cite{ziff1987, ziff1991}  and later studied by functional analytic methods in \cite{banasiak2002, banasiak2004, banasiak2006, bll2019, bt2018, eme2005} and by stochastic approaches in \cite{bert2002, haas2003}, see also the books \cite{bll2019, bert2006} for a more detailed account.

However, the rigorous investigation of the collision-induced breakage equation, as compared to its linear counterpart, is relatively limited. Cheng and Redner  \cite{cheng1988, cheng1990} explored the asymptotic analysis of continuous collision-induced breakage events, focusing on models where the collision of two particles led to various outcomes, such as equal splitting, larger particle splitting, or smaller particle splitting. Krapivsky and Ben-Naim  \cite{krapivsky2003} investigate the dynamics of collision-induced fragmentation, employing its travelling wave behaviour to derive fragment mass distributions. Kostoglou and Karabelas \cite{kostoglou2000, kostoglou2006} studied analytical solutions and asymptotic properties of nonlinear breakage equation for constant, sum, and multiplicative collision kernels. In addition, Ernst and Pagonabarraga \cite{ernst2007} explored the asymptotic behaviour of solutions for the nonlinear breakage equation by transferring it to the linear breakage equation with a different time scale for symmetric breakage with a product kernel $\kk({x},{y}) = ({x}{y})^{q/2}$, where $0\le q \le 2$. Recent studies by Giri and Lauren\c cot \cite{GL12021, GL22021} have made progress in addressing the fundamental question of well-posedness for specific classes of collision kernels and integrable daughter distribution functions without mass transfer and extended in \cite{GJL2024} for non-integrable daughter distribution functions. However, the existence of a self-similar profile in such scenarios, where such a transformation is not available, has remained unexplored.

Primary focus of the present work lies in investigating the existence of mass-conserving self-similar solutions for a specific class of homogeneous collision kernels $\kk$ and homogeneous breakage functions $\bb$ satisfying

\begin{align}
	\kk({x},{y})={x}^{\lambda_1} {y}^{\lambda_2}+{x}^{\lambda_2} {y}^{\lambda_1}, ~~~ ({x},{y})\in (0,\infty)^2,  \label{kernel}
\end{align}

where $k_0 \le {\lambda_1} \le {\lambda_2} \le 1$ for a fix $k_0\in[0,1)$ and $\lambda:={\lambda_1}+{\lambda_2} \in (1,2],$
and
\begin{align}
	\bb({z},{x},{y})=\frac{1}{{x}}{\beta}\bigg(\frac{{z}}{{x}}\bigg) \textbf{1}_{(0,{x})}({z})+\frac{1}{{y}}{\beta}\bigg(\frac{{z}}{{y}}\bigg) \textbf{1}_{(0,{y})}({z}), \qquad ({x},{y})\in(0,\infty)^2, \label{eta_homogeneous_breakage}
\end{align}

subject to the condition

\begin{align}
	\int_0^1 z_*{\beta}(z_*)dz_*=1, \label{eta_local_conservation_no_mass_transfer}
\end{align}
where ${\beta} \in L^1((0,1),z_*dz_*)$, respectively.

Now, let us outline the contents of the paper. In Section 2, some spaces, weak solutions of self-similar profiles along with their alternative form and the main rsults of the paper are summarized. In Section 3, an evolution problem is defined, with the self-similar profile $\eta$ as defined in \eqref{self-similar form} to collision-induced breakage equations. The primary focus in this section is on examining the well-posedness of the corresponding evolution problem and demonstrating important properties of the solutions. Moving to Section 4, a dynamical approach is employed, and a differentiability condition on ${\beta}$ is imposed to establish the existence of a self-similar solution to \eqref{main}. However, the differentiability condition on ${\beta}$ is then relaxed, and the existence of the self-similar solution to \eqref{main} is established. Finally, the properties of the scaling profile are discussed in Section 5.
\section{Function Space and Main results} \label{function space }
\subsection{Function spaces and notation}

Before stating the main result of the paper, let us introduce the following spaces which are essentially required in subsequent analysis.
\begin{enumerate}[label=(\alph*)]
	\item For $r\in \mathbb{R}$,
	\begin{align*}
		\mathcal{S}_r := L^1((0, \infty), {x}^r d{x})
	\end{align*}
	with a weak topology denoted as $\mathcal{S}_{r,w}$, and its positive cone is denoted as $\mathcal{S}_{r,+}$.
	
	\item
	\begin{align*}
		\mathcal{T}^1 := \left\{\tf\in \mathcal{C}^{0,1}([0,\infty)): \tf(0)=0 \right\}.
	\end{align*}
	
	\item For $k\in (0,1)$,
	\begin{align*}
		\mathcal{T}^k := \left\{\tf\in \mathcal{C}^{0,k}([0,\infty)) \cap L^\infty(0,\infty): \tf(0)=0 \right\}.
	\end{align*}
\end{enumerate}

Also, for $r\in\mathbb{R}$, we define
\begin{align*}
	\mm_{r}(g) := \int_{0}^{\infty } {x}^r g({x}) d{x}, \qquad g \in \mathcal{S}_{r}.
\end{align*}
and
\begin{align*}
	L_{r}^{\lambda_1,\lambda_2}(g) := \mm_{r+{\lambda_1}}(g) \mm_{{\lambda_2}}(g) + \mm_{r+{\lambda_2}}(g) \mm_{{\lambda_1}}(g), \qquad g \in \mathcal{S}_{\lambda_1} \cap \mathcal{S}_{r+{\lambda_2}},
\end{align*}
for ${\lambda_1}$ and ${\lambda_2}$ defined in \eqref{kernel}.

\subsection{Weak solution for self-similar profile}

We insert the self-similar ansatz \eqref{self-similar form} into \eqref{main} and use \eqref{homogenous_kernels} to obtain a pair of uncoupled equations for $e$ and the scaling function $\eta$. The equations are as follows:

\begin{subequations}
	\begin{align}
		e^\prime (t) &= -\omega e(t)^\lambda, \qquad \omega > 0,~ \,~ t > 0, \, ~\lambda \in (1,2], \label{MSP:eq} \\
		\omega[2\eta({x}) + {x}\eta^\prime(x)] &= \mathcal{N}\eta({x}), \qquad {x} > 0, \label{sss_main_omega}
	\end{align}
\end{subequations}

Here, the positive value of $\omega$ is consistent with the decreasing mean size $e(t)$ over time due to {the collisional breakage process without mass transfer}. We infer from \eqref{MSP:eq} that
\begin{align}
	e(t) = [1 + \omega t(\lambda - 1)]^{\frac{1}{(1-\lambda)}}.
\end{align}

Within this context, the objective is to ascertain the existence of two positive real values, namely $(\omega, \rho)$, along with a non-negative function $\eta \in L_1((0,\infty), xdx)$, satisfying the following equations:

\begin{subequations}\label{sss_main_omega_and_mass}
	\begin{align}
		\omega[2\eta({x}) + {x}\eta^\prime(x)]= \mathcal{N}(\eta({x})), \qquad {x} > 0, \label{sss_main_omega}
	\end{align}
	{and}
	\begin{align}
		\int_0^\infty x\eta(x)dx= \rho. \label{sss_mass}
	\end{align}
\end{subequations}

\medskip

It is important to note that equation \eqref{sss_main_omega_and_mass} exhibit an important invariance property which is obtained as follows. If we have a solution of \eqref{sss_main_omega_and_mass} with parameters $(\omega, \rho)$, and we consider the transformation $\eta_{a,b}(x) = a\eta(bx)$, where $(a, b) \in (0, \infty)^2$, then $\eta_{a,b}$ is also a solution to \eqref{sss_main_omega_and_mass} with parameters $(a\omega / b^{\lambda +1}, a\rho b^{-2})$. For simplicity, we can choose $\omega=1/\alpha:=1/(\lambda-1)$ and $\rho=1$, leading to the following equations:

\begin{subequations}\label{sss_main_omega_and_mass=1}
	\begin{align}
		2\eta({x}) + {x}\eta^\prime(x)=\alpha  \mathcal{N}(\eta({x})), \qquad {x} > 0, \label{sss_main_omega with alpha}
	\end{align}
	{and}
	\begin{align}
		\int_0^\infty x\eta(x)dx= \rho. \label{sss_mass=1}
	\end{align}
\end{subequations}

\medskip

Now, we define the notion of weak solutions for equation \eqref{sss_main_omega_and_mass=1} that will be utilized throughout the paper.

\begin{defn}\label{defn:weak_solution} Assume that the collision kernel $\kk$ and the breakage function $\bb$ satisfy \eqref{kernel}, \eqref{eta_homogeneous_breakage}, and \eqref{eta_local_conservation_no_mass_transfer}. A self-similar profile to collision-induced breakage equation \eqref{main_in_nonlinear_operator} is a function $\eta \ge 0$ a.e. in $(0,\infty)$  such that
	\begin{align}
		\eta \in \mathcal{S}_{\lambda_1} \cap \mathcal{S}_{1+{\lambda_2}} \qquad \text{and} \qquad  \mm_1(\eta)=1. \label{wf1_sss_mass_conservation}
	\end{align}
	Furthermore, it also satisfies
	\begin{equation}
		\int_{0}^{\infty}[\tf(x)-x\tf^\prime({x})]\eta(x)d{x} =\frac{\alpha}{2}\int_{0}^{\infty}\int_{0}^{\infty}\Upsilon_{\tf}({x},{y})\kk({x},{y})\eta ({x})\eta ({y})d{y} d{x}, \label{weak_formulation_1}
	\end{equation}
	with
	\begin{equation}
		\Upsilon_{\tf}({x},{y}):=\int_{0}^{{x}+{y}}\tf({z})\bb ({z},{x},{y})d{z}-\tf({x})-\tf({y}), \label{upsilon:eq}
	\end{equation}
	for all $\tf \in \mathcal{T}^1$.		
\end{defn}

\medskip

A first consequence of \Cref{defn:weak_solution} is to provide an alternative equation for self-similar profile which also provides a strong integrability than \eqref{eta_local_conservation_no_mass_transfer} on $\beta$.  

\begin{prop}\label{prop: integrability of sss}
	Consider the collision kernel $\kk$ and the breakage function $\bb$ satisfy \eqref{kernel}, \eqref{eta_homogeneous_breakage} and \eqref{eta_local_conservation_no_mass_transfer}. Let $\eta$ be a self-similar profile for the collision-induced breakage equation \eqref{main_in_nonlinear_operator} in the sense of \Cref{defn:weak_solution}. Then, for $x_*\in (0,\infty)$, the following relationship holds
	\begin{align}
		x^2_*\eta(x_*)&={\alpha}\int_{x_*}^\infty \int_0^\infty \int_0^{x_*/ {x}} z_*{\beta}(z_*) {x} \kk({x},{y})\eta({x})\eta({y})dz_* d{y} d{x}, \label{alternate_weak_solution}
	\end{align}
	and $\eta\in \mathcal{C}((0,\infty))$.
	Moreover,
	\begin{align}
		e_{\beta}:=\int_0^1 z_*{\beta}(z_*)|\ln z_*|dz_*<\infty, \label{log estimate on eta}
	\end{align}
	and
	\begin{align}
		L_1^{\lambda_1,\lambda_2}(\eta)=\frac{1}{\alpha e_{\beta}}. \label{L=1/gamma e} 
	\end{align}	
	Furthermore,
	\begin{align}
		\mm_{\lambda_1} (\eta) \ge (\alpha e_{\beta})^{(1-{\lambda_1})/\alpha}, \qquad 
		\text{and} \qquad \mm_{1+{\lambda_2}}(\eta)\le \frac{1}{(\alpha e_{\beta})^{{\lambda_2} / \alpha} }. \label{lower bound on alpha moment}
	\end{align}
	
\end{prop}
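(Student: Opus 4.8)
My plan is to mine the weak formulation \eqref{weak_formulation_1}--\eqref{upsilon:eq} by inserting well-chosen test functions, after first rewriting it in a form adapted to the homogeneous daughter distribution \eqref{eta_homogeneous_breakage}. Substituting \eqref{eta_homogeneous_breakage} into \eqref{upsilon:eq} and rescaling $z=xz_*$, $z=yz_*$ gives $\Upsilon_{\tf}(x,y)=B(x)+B(y)$ with $B(x):=\int_0^1\tf(xz_*)\beta(z_*)dz_*-\tf(x)$. Because $\tf\in\mathcal T^1$ is Lipschitz with $\tf(0)=0$, we have $|\tf(xz_*)|\le\|\tf'\|_\infty xz_*$, so \eqref{eta_local_conservation_no_mass_transfer} yields $|B(x)|\le 2\|\tf'\|_\infty x$; this is exactly where $\beta\in L^1((0,1),z_*dz_*)$ makes $B$ finite even for non-integrable $\beta$. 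Using $\kk(x,y)=\kk(y,x)$ to fold the $B(y)$-contribution onto the $B(x)$-contribution, \eqref{weak_formulation_1} becomes
\[
\int_0^\infty[\tf(x)-x\tf'(x)]\eta(x)dx=\alpha\int_0^\infty\int_0^\infty B(x)\kk(x,y)\eta(x)\eta(y)dy\,dx,
\]
whose right-hand side is absolutely convergent since $\int_0^\infty\int_0^\infty x\kk\eta\eta\,dy\,dx=L_1^{\lambda_1,\lambda_2}(\eta)<\infty$ (the moments $\mm_{\lambda_2}(\eta)$ and $\mm_{1+\lambda_1}(\eta)$ being finite by interpolation between $\mm_{\lambda_1}(\eta)$ and $\mm_{1+\lambda_2}(\eta)$).

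Next I would derive \eqref{alternate_weak_solution}. Writing $\tf(x)=\int_0^x\tf'(\sigma)d\sigma$ and using Fubini on both sides --- kept absolutely convergent throughout by retaining the $z_*$-integral in its $z_*\beta$-weighted form (with total mass $\int_0^1 z_*\beta\,dz_*=1$) rather than integrating it out --- the left side becomes $\int_0^\infty\tf'(\sigma)\big[\int_\sigma^\infty\eta\,dx-\sigma\eta(\sigma)\big]d\sigma$ and the right side takes the form $\int_0^\infty\tf'(\sigma)G(\sigma)d\sigma$. Since $\tf'$ ranges over all of $L^\infty(0,\infty)$ as $\tf$ ranges over $\mathcal T^1$, I can equate integrands to obtain a pointwise a.e.\ identity for $\sigma\eta(\sigma)$. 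Specialising $\tf=\min\{\cdot,x_*\}$ gives the companion identity for $x_*\int_{x_*}^\infty\eta\,dx$, and a linear combination of the two (which cancels the terms carrying $\int_{\cdot}^1\beta$ and leaves only the bounded weight $\int_0^{x_*/x}z_*\beta\,dz_*\le1$) produces \eqref{alternate_weak_solution}.

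With \eqref{alternate_weak_solution} in hand the remaining assertions are comparatively direct. The right-hand side of \eqref{alternate_weak_solution} depends on $x_*$ only through the cut-offs $\mathbf 1_{\{x>x_*\}}$ and $\mathbf 1_{\{z_*<x_*/x\}}$ and is dominated by the integrable majorant $\alpha x\kk\eta\eta$, so dominated convergence shows it is continuous in $x_*$; hence $x_*^2\eta(x_*)$, and therefore $\eta$, admits a continuous representative on $(0,\infty)$. To obtain \eqref{log estimate on eta} and \eqref{L=1/gamma e}, I divide \eqref{alternate_weak_solution} by $x_*$ and integrate over $x_*\in(0,\infty)$: the left side equals $\mm_1(\eta)=1$, while on the right Tonelli (all integrands being nonnegative) lets me integrate in $x_*$ first over $x_*\in(xz_*,x)$, giving $\int_{xz_*}^x x_*^{-1}dx_*=|\ln z_*|$ and hence $1=\alpha\,e_\beta\,L_1^{\lambda_1,\lambda_2}(\eta)$. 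Since the left side is finite and $L_1^{\lambda_1,\lambda_2}(\eta)\in(0,\infty)$, this forces $e_\beta<\infty$ --- the genuinely new integrability $z_*\beta|\ln z_*|\in L^1$ --- and gives \eqref{L=1/gamma e}.

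Finally, for \eqref{lower bound on alpha moment} I would work with the probability measure $d\mu=x\eta(x)dx$ (recall $\mm_1(\eta)=1$). Chebyshev's integral inequality applied to the oppositely monotone factors $x^{\lambda_1}$ (non-decreasing, as $\lambda_1\ge0$) and $x^{\lambda_2-1}$ (non-increasing, as $\lambda_2\le1$) gives $\mm_\lambda(\eta)\le\mm_{1+\lambda_1}(\eta)\mm_{\lambda_2}(\eta)\le L_1^{\lambda_1,\lambda_2}(\eta)$, and log-convexity of $r\mapsto\mm_r(\eta)$ at the exponents $\lambda_1<1<\lambda$ (with $\mm_1(\eta)=1$) gives $1\le\mm_{\lambda_1}(\eta)^w\mm_\lambda(\eta)^{1-w}$ with $w=\alpha/\lambda_2$; solving and inserting \eqref{L=1/gamma e} yields the lower bound $\mm_{\lambda_1}(\eta)\ge(\alpha e_\beta)^{(1-\lambda_1)/\alpha}$. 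The upper bound on $\mm_{1+\lambda_2}(\eta)$ then follows by discarding the nonnegative term $\mm_{1+\lambda_1}(\eta)\mm_{\lambda_2}(\eta)$ in $L_1^{\lambda_1,\lambda_2}(\eta)=1/(\alpha e_\beta)$ and using the lower bound together with $(\alpha e_\beta)^{(1-\lambda_1)/\alpha}(\alpha e_\beta)^{-\lambda_2/\alpha}=(\alpha e_\beta)^{-1}$. I expect the main obstacle to be the rigorous derivation of \eqref{alternate_weak_solution}: justifying the Fubini interchanges and the a.e.\ extraction against arbitrary $\tf'\in L^\infty$ while keeping every integral convergent for non-integrable $\beta$, together with the accompanying Tonelli argument that upgrades $\beta\in L^1((0,1),z_*dz_*)$ to $e_\beta<\infty$; once \eqref{alternate_weak_solution} is secured, continuity, \eqref{L=1/gamma e} and the moment bounds are routine.
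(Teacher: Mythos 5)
Your proposal is correct and follows the same overall architecture as the paper's proof: extract the pointwise identity \eqref{alternate_weak_solution} from the weak formulation \eqref{weak_formulation_1} via test functions and Fubini--Tonelli, get continuity by dominated convergence, integrate $x_*\eta(x_*)$ over $(0,\infty)$ to obtain \eqref{log estimate on eta}--\eqref{L=1/gamma e}, and interpolate moments for \eqref{lower bound on alpha moment}. The implementations differ in three sub-steps. (i) The paper tests with $\tf(x)=x\xi(x)$, $\xi\in\mathcal{C}_0^\infty((0,\infty))$, which only determines $x_*^2\eta(x_*)$ up to an additive constant $C$ and then needs a separate argument (letting $x_*\to\infty$ and invoking $\eta\in\mathcal{S}_1$) to show $C=0$; your choice of arbitrary $\tf'\in L^\infty$ together with the specialization $\tf=\min\{\cdot,x_*\}$ determines the function outright, and the announced cancellation does work: for $x\le x_*$ one has $B(x)=0$ by \eqref{eta_local_conservation_no_mass_transfer}, while for $x>x_*$ the terms carrying $\int_{x_*/x}^1\beta(z_*)\,dz_*-1$ cancel against the general identity, leaving exactly the $z_*\beta$-weighted integrand of \eqref{alternate_weak_solution}. (ii) The paper truncates at $x_0>0$ and uses Fatou's lemma to first get $\alpha e_\beta L_1^{\lambda_1,\lambda_2}(\eta)\le 1$; your direct Tonelli computation $\int_{xz_*}^{x}x_*^{-1}dx_*=|\ln z_*|$ yields the identity $1=\alpha e_\beta L_1^{\lambda_1,\lambda_2}(\eta)$ in one stroke --- just record explicitly that $0<L_1^{\lambda_1,\lambda_2}(\eta)<\infty$ (from $\mm_1(\eta)=1$ and interpolation between $\mm_{\lambda_1}(\eta)$ and $\mm_{1+\lambda_2}(\eta)$) so that $e_\beta<\infty$ follows. (iii) For the moment bounds the paper simply drops one nonnegative summand to get $\mm_{\lambda_1}(\eta)\mm_{1+\lambda_2}(\eta)\le 1/(\alpha e_\beta)$ and interpolates $\mm_1$ between $\mm_{\lambda_1}$ and $\mm_{1+\lambda_2}$; your Chebyshev correlation inequality $\mm_\lambda(\eta)\le\mm_{1+\lambda_1}(\eta)\mm_{\lambda_2}(\eta)$ followed by interpolation at the exponents $\lambda_1<1<\lambda$ is a valid, slightly longer detour landing on the same exponents. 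No step fails.
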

\begin{proof}
	
	Let $\xi \in \mathcal{C}_0^\infty((0,\infty))$ and take $\tf({x})={x}\xi({x})$ for ${x}\in (0,\infty)$, we estimate $	\Upsilon_\tf({x},{y})$ with the help of \eqref{eta_homogeneous_breakage} 
	
	\begin{align*}
		\Upsilon_\tf({x},{y})&=\int_0^{{x}+{y}} {z}\xi({z})\bb({z},{x},{y})d{z}-{x}\xi({x})-{y}\xi({y}) \\
		&=\int_0^{x} \frac{{z}}{{x}} \beta\left( \frac{{z}}{{x}}\right) (\xi({z})-\xi({x}))d{z}+	\int_0^{y} \frac{{z}}{{y}} \beta\left(\frac{{z}}{{y}}\right) (\xi({z})-\xi({y}))d{z}\\
		& =-\int_0^{x} \int_{z}^{x}  \xi^\prime(x_*)\frac{{z}}{{x}} \beta\left( \frac{{z}}{{x}}\right)dx_*d{z}-	\int_0^{y}  \int_{z}^{y}  \xi^\prime(x_*) \frac{{z}}{{y}} \beta\left(\frac{{z}}{{y}}\right)dx_*d{z}.
	\end{align*}
	It follows from  \eqref{weak_formulation_1} and Fubini-Tonelli's theorem that
	\begin{align*}		
		\int_0^\infty x_*^2\eta(x_*)\xi^\prime(x_*)dx_*=& \frac{\alpha}{2}\int_0^\infty\int_0^\infty \int_0^{x} \int_0^{x_*}\xi^\prime(x_*)\frac{{z}}{{x}} \beta\left( \frac{{z}}{{x}}\right)\kk({x},{y})\eta({x})\eta({y})d{z} dx_* d{y} d{x}\\
		&+ \frac{\alpha}{2}\int_0^\infty\int_0^\infty \int_0^{y} \int_0^{x_*}\xi^\prime(x_*)\frac{{z}}{{y}} \beta\left( \frac{{z}}{{y}}\right)\kk({x},{y})\eta({x})\eta({y})d{z} dx_* d{y} d{x}\\
		=&  {\alpha}\int_0^\infty  \xi^\prime(x_*) \int_{x_*}^\infty \int_0^\infty \int_0^{x_*}\frac{{z}}{{x}} \beta\left( \frac{{z}}{{x}}\right)\kk({x},{y})\eta({x})\eta({y})d{z} d{y} d{x} dx_*\\
		=& {\alpha}\int_0^\infty  \xi^\prime(x_*) \int_{x_*}^\infty \int_0^\infty \int_0^{x_*/{x}}z_*{\beta}(z_*){x}\kk({x},{y})\eta({x})\eta({y})dz_* d{y} d{x} dx_*.\\
	\end{align*}
	
	Since the above identity being valid for all $\xi\in \mathcal{C}_0^\infty((0,\infty))$, there is a constant $C\in \mathbb{R}$ such that
	\begin{align*}
		x_*^2\eta(x_*)-C={\alpha}\int_{x_*}^\infty \int_0^\infty \int_0^{x_*/{x}}z_*{\beta}(z_*){x}\kk({x},{y})\eta({x})\eta({y})dz_* d{y} d{x}.
	\end{align*}
	
	Finally, we infer from \eqref{eta_local_conservation_no_mass_transfer} that
	\begin{align}
		| x_*^2\eta(x_*)-C|  \le {\alpha}\int_{x_*}^\infty \int_0^\infty {x}\kk({x},{y})\eta({x})\eta({y}) d{y} d{x}. \label{alternate wf 1:eq}
	\end{align}
	Notice that as $x_* \to \infty$, \eqref{alternate wf 1:eq} approaches to zero. This behaviour is consistent with $\eta \in \mathcal{S}_1$  only when $C$ equal to zero. Consequently, this leads to the derivation of the identity \eqref{alternate_weak_solution}. Continuity of $\eta$ concluded with the help of \eqref{alternate wf 1:eq} and integrability characteristics of both $\eta$ and ${\beta}$ listed in Definition \eqref{defn:weak_solution}.
	
	Next, consider $x_0\in (0,\infty)$ and integrate \eqref{alternate_weak_solution} from $x_0$ to $\infty$, we get
	\begin{align*}
		\int_{x_0}^\infty x_*\eta(x_*)dx_*={\alpha}\int_{x_0}^\infty \frac{1}{x_*}\int_{x_*}^\infty \int_0^\infty \int_0^{x_*/ {x}} z_*{\beta}(z_*) {x} \kk({x},{y})\eta({x})\eta({y})dz_*d{y} d{x}  dx_*.
	\end{align*}
	
	Use of Fubini-Tonelli's theorem and \eqref{wf1_sss_mass_conservation}, we get
	
	\begin{align}
		1 \ge \int_{x_0}^\infty x_*\eta(x_*)dx_* =&\alpha \int_{0}^1 \int_0^\infty \int_{x_0}^{x_0/z_*}  z_*{\beta}(z_*) {x} \kk({x},{y})\eta({x})\eta({y})\ln \bigg( \frac{{x}}{x_0}\bigg) d{x} d{y} dz_* \nonumber \\
		&+\alpha \int_{0}^1 \int_0^\infty \int_{x_0/z_*}^{\infty}  z_*{\beta}(z_*) |\ln(z_*)|{x} \kk({x},{y})\eta({x})\eta({y}) d{x} d{y} dz_*.  \label{mass_tail_bound by 1}
	\end{align}
	
	Since ${\beta}$ and $\eta$ are nonnegative, we get
	
	\begin{align*}
		\alpha \int_{0}^1 \int_0^\infty \int_{x_0/z_*}^{\infty}  z_*{\beta}(z_*) |\ln(z_*)|{x} \kk({x},{y})\eta({x})\eta({y}) d{x} d{y} dz_* \le 1.
	\end{align*}
	with the aid \eqref{wf1_sss_mass_conservation}, we can pass the limit $x_0 \to 0$ in the previous inequality and deduce from Fatou's lemma that
	\begin{align*}
		\alpha e_{\beta} L_1^{\lambda_1,\lambda_2}(\eta) \le 1.
	\end{align*}
	Consequently, $e$ is finite and we may then pass the limit  as $x_0 \to 0$ in \eqref{mass_tail_bound by 1} to get $ L_1^{\lambda_1,\lambda_2}(\eta)=1/\alpha e_{\beta}$. From which we conclude that $\mm_{\lambda_1} (\eta)\mm_{1+{\lambda_2}}(\eta) \le 1/\alpha e_{\beta}$ . Since ${\lambda_1} \le 1 \le 1+{\lambda_2}$, we can use H\"older's inequality and deduce \eqref{lower bound on alpha moment}.
\end{proof}

\medskip

\Cref{prop: integrability of sss} reveals a necessary condition: if there exists a self-similar profile for \eqref{main_in_nonlinear_operator} in the sense of Definition \ref{defn:weak_solution}, then it will not exist for any arbitrary ${\beta} \in L^1((0,1)z_*dz_*)$ that satisfies \eqref{eta_local_conservation_no_mass_transfer}, it should also satisfy \eqref{log estimate on eta}. Therefore, we also assume the following assumption $\beta$ which is even stonger than \eqref{log estimate on eta},

\begin{align}
	{\beta} \in L_1((0,1), z_*^{k_0}dz_*), \label{eta_condition_1}
\end{align}
where $k_0$ being defined in \eqref{kernel}. Unfortunately, the existence of self-similar solutions to \eqref{main} has not been covered in this article when $\beta$ satisfies \eqref{log estimate on eta} instead of \eqref{eta_condition_1}, which is shown in the case of linear fragmentation equation \cite{bt2018, Fil1961, bll2019}.

\subsection{Main results} The main results of this paper are following.

\begin{thm}[Existence]\label{thm:existence}
	Let us consider the collision kernel $\kk$ as defined in \eqref{kernel}, with $\bb$ satisfying \eqref{eta_homogeneous_breakage} and ${\beta}$  satisfy \eqref{eta_local_conservation_no_mass_transfer} and \eqref{eta_condition_1}. Then there exists atleast one self-similar profile $\eta$ to \eqref{main_in_nonlinear_operator} according to the  \Cref{defn:weak_solution}, satisfying
	\begin{align}
		\eta \in L_\infty((0,\infty),x^{k_0+1}dx) \cap \bigcap_{k\ge k_0} \mathcal{S}_k, \label{thm3_properties}
	\end{align}
\end{thm}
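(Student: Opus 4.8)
The plan is to realise the profile $\eta$ as a steady state of an associated \emph{mass-conserving} evolution problem and to produce that steady state by a dynamical/compactness argument, first for a smooth breakage profile and then for the general one by approximation. Concretely, I would insert the self-similar ansatz \eqref{self-similar form} into \eqref{main} and pass to the self-similar variables $y=x/e(t)$ together with the rescaled time $\tau$ attached to $e(t)=[1+\omega t(\lambda-1)]^{\frac{1}{(1-\lambda)}}$, so that a rescaled density $g(\tau,\cdot)$ solves an evolution equation of the form
\begin{equation*}
	\partial_\tau g = \alpha\,\mathcal{N}(g) - 2g - y\,\partial_y g,
\end{equation*}
whose stationary solutions are exactly the solutions of \eqref{sss_main_omega with alpha}, i.e. the self-similar profiles. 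The decisive structural feature is that this rescaled flow conserves the first moment: testing the transport part $-2g-y\partial_y g$ against $x$ and integrating by parts gives $0$, while $\mathcal{N}$ is mass-neutral by \eqref{local_conservation_mass_transfer}. Hence the flow can be restricted to the affine slice $\{g\ge0:\ \mm_1(g)=1\}$ of $\mathcal{S}_1$.

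The core analytic work, carried out in Section 3, is a family of uniform-in-$\tau$ a priori estimates. I would derive: (i) uniform bounds and a uniform \emph{positive} lower bound on the low moments $\mm_{\lambda_1}(g)$ and $\mm_{1+\lambda_2}(g)$ (consistent with the necessary relation $L_1^{\lambda_1,\lambda_2}(\eta)=1/(\alpha e_\beta)$ from \Cref{prop: integrability of sss}), which prevents the mass from draining to $x=0$ or escaping to $x=\infty$; (ii) propagation of all high moments $\mm_k(g)$, $k\ge k_0$, via a differential inequality of the form $\frac{d}{d\tau}\mm_k(g)\le (k-1)\mm_k(g)-c\,\mm_k(g)^{p}$ with $p>1$, in which the mass-conserving-but-downward collision term supplies a super-linear loss — because $\int_0^1 z_*^k\beta(z_*)\,dz_*<1$ for $k>1$, each breakage event strictly lowers $\mm_k$ — which dominates the linear growth produced by the self-similar transport and yields a bound uniform in $\tau$; and (iii) a weighted $L^\infty$ bound $x^{k_0+1}g(\tau,x)\le C$ obtained from the alternative representation \eqref{alternate_weak_solution}, which is exactly where the strengthened integrability \eqref{eta_condition_1} on $\beta$ is used to control the production of small fragments. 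These estimates single out a nonempty, convex set $\mathcal{K}\subset\mathcal{S}_1$ defined by the mass constraint together with the moment and weighted $L^\infty$ bounds; the high-moment bounds give tightness at infinity while the weighted $L^\infty$ bound together with the $\mm_{\lambda_1}$ lower bound gives uniform integrability near the origin, so that $\mathcal{K}$ is weakly sequentially compact in $\mathcal{S}_1$, and it is invariant under the flow by construction.

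With $\mathcal{K}$ in hand I would invoke a dynamical fixed-point argument: imposing a differentiability assumption on $\beta$ makes the rescaled problem well posed and the semiflow $(S_\tau)_{\tau\ge0}$ weakly sequentially continuous on $\mathcal{K}$; since $\mathcal{K}$ is convex, weakly compact and invariant, a stationary point $\eta\in\mathcal{K}$ with $S_\tau\eta=\eta$ for all $\tau$ exists. One then checks that $\eta$ satisfies \eqref{weak_formulation_1} with $\mm_1(\eta)=1$, so that it is a weak self-similar profile in the sense of \Cref{defn:weak_solution}, and that the regularity \eqref{thm3_properties} is inherited directly from the defining bounds of $\mathcal{K}$. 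To remove the differentiability hypothesis, I would approximate a general $\beta$ satisfying \eqref{eta_local_conservation_no_mass_transfer} and \eqref{eta_condition_1} by smooth $\beta_n$ obeying the same two conditions with uniform constants, apply the previous step to obtain profiles $\eta_n$ all lying in the same set $\mathcal{K}$, extract a weakly convergent subsequence $\eta_n\rightharpoonup\eta$, and pass to the limit in \eqref{weak_formulation_1}; the uniform higher-moment and weighted $L^\infty$ bounds are precisely what allow the quadratic collision term to pass to the limit and what guarantee that $\mm_1(\eta)=1$ is preserved.

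The hard part will be the a priori estimates of steps (ii)–(iii): because the breakage conserves mass yet continually transfers it to smaller sizes, one must show that the self-similar damping balances this downward flux \emph{uniformly in time}, simultaneously forbidding concentration of mass at the origin and loss of mass at infinity. The weighted $L^\infty$ bound, and the verification that the limiting stationary solution is non-degenerate with mass exactly one (rather than the trivial solution or a measure with defective mass), are the delicate points; they are where the coupling between the lower bound on $\mm_{\lambda_1}(\eta)$, the high-moment propagation, and the integrability condition \eqref{eta_condition_1} must be exploited together.
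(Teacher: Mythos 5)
Your overall strategy --- recasting the profile equation as the stationarity condition for the rescaled evolution problem \eqref{main_rescaled}, building a convex, weakly compact, positively invariant set, applying the fixed-point theorem for dynamical systems (\Cref{thm:DS}) under an extra smoothness assumption on $\beta$, and finally removing that assumption by mollifying $\beta$ and passing to the limit in the weak formulation --- is exactly the paper's. One of your key ingredients, however, has a genuine gap. In your step (iii) you propose to obtain the weighted bound $x^{k_0+1}g(\tau,x)\le C$ \emph{along the flow} from the representation \eqref{alternate_weak_solution}. That identity is derived from the stationary weak formulation \eqref{weak_formulation_1}; it is not available for time-dependent solutions of \eqref{main_rescaled}, where the term $\partial_\tau g$ destroys the computation, and you give no independent argument that such a bound is propagated. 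The paper uses \eqref{alternate_weak_solution} only a posteriori, to prove \eqref{thm3_infty_bound} for the already-constructed profile and to get uniform integrability of the family $\{\eta_\delta\}$ in the approximation step. For the compactness of the invariant set itself the paper instead propagates a weighted $L^1$ bound on $\partial_X \vv$ (\Cref{derivative_moment_estimate}), and this is precisely where the auxiliary differentiability hypothesis \eqref{eta_condition_2} on $\beta$ enters --- not merely in the well-posedness step, as your outline suggests. Without this derivative estimate (or a proof that your weighted $L^\infty$ bound is flow-invariant) your set $\mathcal{K}$ is not known to be compact, and the fixed-point theorem cannot be invoked.

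A smaller point: you do not need, and the paper does not prove, uniform positive lower bounds on $M_{\lambda_1}(g)$ or $M_{1+\lambda_2}(g)$ along the flow. Non-degeneracy of the stationary point is automatic because $M_1(\zeta)=1$ is part of the definition of the invariant set and survives weak limits thanks to the tightness and uniform integrability furnished by the moment bounds of \Cref{moment_estimates}; the relation $L_1^{\lambda_1,\lambda_2}(\eta)=1/(\alpha e_{\beta})$ is a consequence for the constructed profile, not an a priori estimate used in the construction. Apart from these two issues, your outline --- including the superlinear dissipation in the high-moment inequality driven by $\int_0^1 z_*^k\beta(z_*)\,dz_*<1$ for $k>1$, and the final Dunford--Pettis limit passage --- matches the paper's proof.
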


The proof of \Cref{thm:existence} is based on the dynamical system approach performed in \cite[Section 10.1]{bll2019} to show the existence of self-similar profiles for the linear fragmentation equation.

\begin{rem}
	Note that when $\nu \in (-2, 0]$, the breakage kernel $\bb$ defined as
	\begin{align}
		\bb({z},{x},{y})=(\nu+2)\frac{{z}^\nu}{{x}^{\nu+1}}\textbf{1}_{(0,{x})}({z})+(\nu+2)\frac{{z}^\nu}{{y}^{\nu+1}}\textbf{1}_{(0,{y})}({z})
	\end{align}
	satisfies equations \eqref{eta_homogeneous_breakage}, \eqref{eta_local_conservation_no_mass_transfer}, \eqref{eta_condition_1}, and \eqref{eta_condition_2} with ${\beta}(z_*)=(\nu+2)z_*^{\nu}$. Therefore, \Cref{thm:existence} is applicable to this specific choice of the breakage function.
\end{rem}

Moreover, the weak solution $\eta$ for \eqref{sss_main_omega_and_mass=1}, as constructed in \Cref{thm:existence}, exhibits the following properties. A similar result is established in \cite[Proposition 10.1.12]{bll2019} for linear fragmentation equation.

\begin{thm}[Strict positiveness and lower bound of the scaling profile]\label{thm:properties_sss}
	Consider the collision kernel $\kk$ and the breakage function $\bb$ satisfying \eqref{kernel}, \eqref{eta_homogeneous_breakage}, \eqref{eta_local_conservation_no_mass_transfer} and {\eqref{eta_condition_1}}. Let $\eta$ be a self-similar profile for the collision-induced breakage equation \eqref{main_in_nonlinear_operator} in the sense of \Cref{defn:weak_solution}. If ${\beta}>0$ a.e in $(0,1)$. Then $\eta$ is strictly positive in $(0,\infty)$ and for all $ x_* \in (0,\infty)$, the lower bounds on the profile $\eta$ is given by
	\begin{align}\label{lower bound on phi}
		r^2e^{\alpha g(r)} \eta(r) \ge  x_*^2e^{\alpha g(x_*)} \eta(x_*), \qquad r \in (x_*,\infty),
	\end{align}
	with
	\begin{align}\label{g(x)}
		g(x_*):=\max\{\mm_{\lambda_1}(\eta), \mm_{\lambda_2}(\eta)\}\bigg(\frac{x_*^{\lambda_1}}{{\lambda_1}}+ \frac{x_*^{\lambda_2}}{{\lambda_2}}\bigg) \quad .
	\end{align}    
\end{thm}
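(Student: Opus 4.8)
The plan is to work from the alternative representation \eqref{alternate_weak_solution} of the profile rather than the pointwise equation \eqref{sss_main_omega with alpha}, since $\eta$ need not be classically differentiable. Writing $\phi(x_*):=x_*^2\eta(x_*)$ and carrying out the $z_*$- and $y$-integrations in \eqref{alternate_weak_solution}, I would first record the factorised form
\begin{align*}
	\phi(x_*) = \alpha \int_{x_*}^\infty x\eta(x)\Big[x^{\lambda_1}\mm_{\lambda_2}(\eta)+x^{\lambda_2}\mm_{\lambda_1}(\eta)\Big] B(x_*/x)\, dx, \qquad B(s):=\int_0^s z_*\beta(z_*)\, dz_*,
\end{align*}
where $B(1)=1$ by \eqref{eta_local_conservation_no_mass_transfer} and, since $\beta>0$ a.e., $B(s)>0$ for every $s\in(0,1]$. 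I set $K(x):=\int_0^\infty\kk(x,y)\eta(y)\,dy = x^{\lambda_1}\mm_{\lambda_2}(\eta)+x^{\lambda_2}\mm_{\lambda_1}(\eta)$, which is finite by \eqref{wf1_sss_mass_conservation} and strictly positive on $(0,\infty)$ because $\mm_{\lambda_1}(\eta)>0$.

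The heart of the argument is a differential inequality for $\phi$. Differentiating the factorised identity in $x_*$, the boundary contribution at $x=x_*$ equals $-\alpha x_*\eta(x_*)K(x_*)$ (using $B(1)=1$), while the interior term $\alpha\int_{x_*}^\infty \eta(x)K(x)(x_*/x)\beta(x_*/x)\,dx$ is nonnegative; discarding it and using $x_*\eta(x_*)=\phi(x_*)/x_*$ yields
\begin{align*}
	\phi'(x_*) \ge -\alpha\,\frac{K(x_*)}{x_*}\,\phi(x_*).
\end{align*}
Since $K(x_*)/x_* = x_*^{\lambda_1-1}\mm_{\lambda_2}(\eta)+x_*^{\lambda_2-1}\mm_{\lambda_1}(\eta)\le \max\{\mm_{\lambda_1}(\eta),\mm_{\lambda_2}(\eta)\}(x_*^{\lambda_1-1}+x_*^{\lambda_2-1})=g'(x_*)$ by \eqref{g(x)}, I obtain $\phi'(x_*)+\alpha g'(x_*)\phi(x_*)\ge0$. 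Multiplying by the integrating factor $e^{\alpha g(x_*)}$ shows that $x_*\mapsto e^{\alpha g(x_*)}\phi(x_*)$ is nondecreasing, which is precisely the claimed lower bound \eqref{lower bound on phi} upon comparing its values at $x_*$ and at $r>x_*$.

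For strict positivity I would argue by contradiction. If $\eta(x_0)=0$ for some $x_0>0$, then $\phi(x_0)=0$ forces the nonnegative integrand in the factorised identity to vanish for a.e.\ $x>x_0$; as $B(x_0/x)>0$ and $K(x)>0$ there, this gives $\eta\equiv0$ on $[x_0,\infty)$ by continuity of $\eta$. On the other hand, the monotonicity of $e^{\alpha g}\phi$ together with $\phi(x_0)=0$ and $\phi\ge0$ forces $\eta\equiv0$ on $(0,x_0]$. Hence $\eta\equiv0$ on $(0,\infty)$, contradicting $\mm_1(\eta)=1$ from \eqref{wf1_sss_mass_conservation}, so $\eta>0$ throughout $(0,\infty)$.

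The main obstacle is rigour in the differentiation step: because $\beta$ is assumed only to lie in a weighted $L^1$ space, $B$ is merely absolutely continuous and $\phi$ need not be classically differentiable. I would therefore verify that $\phi$ is locally absolutely continuous, using that $G(x):=x\eta(x)K(x)$ is continuous and integrable on $(0,\infty)$, its integral being $\mm_{\lambda_2}(\eta)\mm_{1+\lambda_1}(\eta)+\mm_{\lambda_1}(\eta)\mm_{1+\lambda_2}(\eta)<\infty$ by \eqref{wf1_sss_mass_conservation} and interpolation between $\mathcal{S}_{\lambda_1}$ and $\mathcal{S}_{1+\lambda_2}$. The differential inequality then holds for a.e.\ $x_*$, the integrating-factor conclusion is carried out within the class of absolutely continuous functions, and the monotonicity of $e^{\alpha g}\phi$ follows from its a.e.\ nonnegative derivative.
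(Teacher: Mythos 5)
Your proposal is correct and follows essentially the same route as the paper: differentiate the representation \eqref{alternate_weak_solution}, keep only the negative boundary term at $x=x_*$, absorb it with the integrating factor $e^{\alpha g(x_*)}$ (using $K(x_*)/x_*\le g'(x_*)$) to get monotonicity of $x_*\mapsto x_*^2 e^{\alpha g(x_*)}\eta(x_*)$, and use the a.e.\ positivity of $\beta$ and of $\kk$ in the vanishing integral to exclude zeros. Your explicit treatment of the absolute continuity of $x_*\mapsto x_*^2\eta(x_*)$ and the slightly rearranged contradiction argument for strict positivity are, if anything, more careful than the paper's own write-up.
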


\section{Evolution Problem} \label{evolution problem}
Let us first establish an evolutionary problem by employing scaling variables. To demonstrate the existence of a stationary solution to the evolutionary problem, we will utilize the dynamical approach, as performed in \cite[Section 7.3.2]{bll2019}.

\medskip

Introducing the scaling variables 
\begin{align}
	\tau:=\frac{1}{\alpha}\ln (1+t), \qquad  X:=x(1+t)^{1/\alpha}
\end{align}
and the rescaled function 
\begin{align}
	\vv(\tau,X):=e^{-2\tau} \ff(e^{\alpha \tau}-1,X e^{-\tau}), \qquad (\tau,X)\in[0,\infty)\times (0,\infty). \label{rescaled_function}
\end{align}

Alternatively, we can also write
\begin{align}
	\ff(t,x)=(1+t)^{2/\alpha}\vv\bigg(\frac{1}{\alpha}\ln(1+t),{x}(1+t)^{1/\alpha}\bigg), \qquad (t,x)\in [0,\infty)\times (0,\infty). \label{alternate_rescaled_function} 
\end{align}
Now according to \eqref{main} and \eqref{alternate_rescaled_function},

\begin{subequations}\label{main_in_rescaled}
	\begin{align}
		\partial_\tau \vv(\tau,X)+X&\partial_X \vv(\tau,X)+2\vv(\tau,X) \nonumber\\
		=&\frac{\alpha}{2}\int_X^\infty \int_0^Y \bb(X,Y-Z,Z)\kk(Y,Z)\vv(\tau,Y-Z)\vv(\tau,Z)dZdY \nonumber \\
		& -\alpha \int_0^\infty \kk(X,Y) \vv(\tau, X)\vv(\tau,Y)dY, \qquad (\tau,X)\in (0,\infty)^2. \label{main_rescaled}
	\end{align}
	
	\medskip
	
	The evolution problem we need to solve is represented by \eqref{main_rescaled}, where the stationary solution $\eta$ serves as the solution to \eqref{sss_main_omega_and_mass=1}. In order to establish the existence of a self-similar profile $\eta$ to \eqref{main}, our focus is on demonstrating the existence of stationary solutions for \eqref{main_rescaled}. To fully understand the dynamics of the system described by equation \eqref{main_rescaled}, we include an initial condition
	\begin{align}
		\vv(0,X)=\vv^{in}(X), \qquad X\in(0,\infty). \label{in_rescaled}
	\end{align}
\end{subequations}

The definition, provided in \cite[Definition 2.1]{GJL2024} for weak solutions to the collision-induced breakage equation \eqref{main_in_nonlinear_operator}, can be used to define the notion of  weak solutions for \eqref{main_in_rescaled}. This deduction is possible due to the connection established in \eqref{rescaled_function} and \eqref{alternate_rescaled_function} between the solutions to the \eqref{main_in_nonlinear_operator} and \eqref{main_in_rescaled}.

\begin{defn}\label{defn:rescaled weak solution}
	Let us consider the collision kernel $\kk$ as defined in \eqref{kernel} and  $\bb$ satisfies  \eqref{eta_homogeneous_breakage} and  \eqref{eta_local_conservation_no_mass_transfer}. Let, for fix $k\in [k_0,1)$,  $\vv^{\mbox{\rm{\mbox{in}}}} \in \mathcal{S}_k \cap \mathcal{S}_{1,+}$ and $T\in(0,\infty]$. Then a mass-conserving weak solution to \eqref{main_rescaled}-\eqref{in_rescaled} on $[0,T)$ is a nonnegative function $\vv\in \mathcal{C}([0,T),\mathcal{S}_{k,w}\cap \mathcal{S}_{1,w})$ such that $\vv(0)=\vv^{\mbox{\rm{\mbox{in}}}}$,  
	\begin{equation}
		\int_0^\infty \int_0^\infty (X^k+y^k) \kk(X,Y) \vv(\tau,X)\vv(\tau,Y) <\infty, \quad   \mm_1(\vv(t))=\mm_1(\vv^{\mbox{\rm{\mbox{in}}}}),
	\end{equation}
	and
	\begin{align}
		\int_{0}^{\infty}\tf(X)\vv(t,X)dX =& \int_{0}^{\infty}\tf(X)\vv^{\mbox{\rm{\mbox{in}}}}(X)dX + \int_0^t \int_0^\infty [X\tf^\prime(X)-\tf(X)]\vv(\tau,X)dXd\tau \nonumber \\
		&+\frac{\alpha}{2}\int_{0}^{t}\int_{0}^{\infty}\int_{0}^{\infty}\Upsilon_{\tf}(X,Y)\kk(X,Y)\vv (\tau, X)\vv (\tau, Y)dY dX d\tau, \label{weak_formulation_2}
	\end{align}
	with
	\begin{equation}
		\Upsilon_{\tf}(X,Y):=\int_{0}^{X+Y}\tf(Z)\bb (Z,X,Y)dZ-\tf(X)-\tf(Y), \label{upsilon_rescaled}
	\end{equation}
	for all $\tf \in \mathcal{T}^k \cap \mathcal{T}^1$ and $t\in (0,T)$.
\end{defn}

The analysis conducted in \cite{GJL2024} ensures the existence and uniqueness of weak solution, in the sense of \Cref{defn:rescaled weak solution} to  \eqref{main_in_rescaled}  within the space $\mathcal{S}_{k_*,+}\cap \mathcal{S}_{1+k_*+{\lambda_2}}$ for all $k_*\in (k_0,1)$.  For this purpose we assume that $\beta$ satisfy
\begin{align}
	\Xi:=\int_0^1 z_*^{k_1} |{\beta}^\prime (z_*)|dz_* <\infty, \label{eta_condition_2}
\end{align} 
for some constant $k_1 \in [1,2)$. This estimate is crucial for handling compactness issues within the dynamical system approach. However, Later relaxes this condition by employing approximations of ${\beta}$.

\subsection{Well-posedness of \eqref{main_in_rescaled}}
\begin{prop}\label{wellposedness_evolution_problem}
	Let us consider the collision kernel $\kk$ as defined in \eqref{kernel} and  $\bb$ satisfies  \eqref{eta_homogeneous_breakage} and  \eqref{eta_local_conservation_no_mass_transfer}, while ${\beta}$ satisfies conditions \eqref{eta_condition_1} and \eqref{eta_condition_2}. Consider $k_*\in(k_0,1)$ and an initial condition $\vv^{\mbox{\rm{\mbox{in}}}}$ such that
	\begin{align}
		\vv^{\mbox{\rm{\mbox{in}}}} \in \mathcal{S}_{k_* ,+}\cap \mathcal{S}_{1+k_*+{\lambda_2}} \qquad \text{and} \qquad \mm_1(\vv^{\mbox{\rm{\mbox{in}}}})=1. \label{in_rescaled_assumption}
	\end{align} 
	Then there exists a unique mass-conserving weak solution $\vv$ to  \eqref{main_rescaled}-\eqref{in_rescaled} on $[0,\infty)$. In particular,
	\begin{align}
		\mm_1(\vv(\tau))=\mm_1(\vv^{\mbox{\rm{\mbox{in}}}})=1, \qquad \tau\ge 0.
	\end{align}
\end{prop}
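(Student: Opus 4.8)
The plan is to avoid solving the rescaled problem \eqref{main_rescaled}--\eqref{in_rescaled} directly and instead transport the well-posedness theory for the original equation \eqref{main_in_nonlinear_operator} established in \cite{GJL2024} through the explicit, invertible change of variables recorded in \eqref{rescaled_function} and \eqref{alternate_rescaled_function}. Since the map $(t,x)\mapsto(\tau,X)$ is a smooth bijection of $[0,\infty)\times(0,\infty)$ onto itself with $\tau=0\Leftrightarrow t=0$ and $X=x$ there, the first step is to set $\ff^{\mbox{\rm{\mbox{in}}}}:=\vv^{\mbox{\rm{\mbox{in}}}}$; assumption \eqref{in_rescaled_assumption} then furnishes an admissible initial datum for \eqref{main_in_nonlinear_operator} lying in $\mathcal{S}_{k_*,+}\cap\mathcal{S}_{1+k_*+\lambda_2}$ with $\mm_1(\ff^{\mbox{\rm{\mbox{in}}}})=1$. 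Invoking the existence and uniqueness result of \cite{GJL2024}, valid under the standing hypotheses \eqref{eta_condition_1}--\eqref{eta_condition_2} on $\beta$, I obtain a unique global mass-conserving weak solution $\ff$ to \eqref{main_in_nonlinear_operator}, together with the moment bounds it propagates.

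Next I define $\vv$ from $\ff$ by \eqref{rescaled_function} and verify the requirements of \Cref{defn:rescaled weak solution}. The substitution $x=Xe^{-\tau}$ yields the moment identity $\mm_r(\vv(\tau))=e^{(r-1)\tau}\mm_r(\ff(e^{\alpha\tau}-1))$ for every $r$, so the finiteness and weak continuity of $\mm_{k_*}(\vv)$ and $\mm_1(\vv)$, as well as the integrability $\int_0^\infty\int_0^\infty(X^{k_*}+y^{k_*})\kk(X,Y)\vv(\tau,X)\vv(\tau,Y)\,dY\,dX<\infty$, all follow from the corresponding properties of $\ff$. Taking $r=1$ kills the exponential prefactor and gives the mass conservation $\mm_1(\vv(\tau))=\mm_1(\ff(e^{\alpha\tau}-1))=\mm_1(\ff^{\mbox{\rm{\mbox{in}}}})=\mm_1(\vv^{\mbox{\rm{\mbox{in}}}})=1$ for all $\tau\ge0$, which is the asserted conclusion.

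The computational heart of the argument is checking that $\vv$ satisfies the weak formulation \eqref{weak_formulation_2}. I would start from $\tau\mapsto\int_0^\infty\tf(X)\vv(\tau,X)\,dX$, rewrite it via \eqref{rescaled_function} as $e^{-\tau}\int_0^\infty\tf(xe^\tau)\ff(e^{\alpha\tau}-1,x)\,dx$, differentiate in $\tau$, and insert the weak formulation of \cite{GJL2024} for $\ff$. Differentiating the factor $e^{-\tau}$ and the scaled argument $xe^\tau$ of $\tf$ produces, after undoing the substitution, exactly the drift contribution $\int_0^\infty[X\tf^\prime(X)-\tf(X)]\vv(\tau,X)\,dX$, while the collision term is handled by the homogeneity \eqref{homogenous_kernels}: writing $X=xe^\tau$, $Y=ye^\tau$, $Z=ze^\tau$ one checks $\Upsilon_{\tf(\cdot\,e^\tau)}(x,y)=\Upsilon_{\tf}(X,Y)$, $\kk(x,y)=e^{-\lambda\tau}\kk(X,Y)$, together with the Jacobian $e^{-2\tau}$ and the two factors $e^{2\tau}$ coming from $\ff(t,\cdot)=e^{2\tau}\vv(\tau,\cdot\,e^\tau)$; the chain-rule factor $\tfrac{d}{d\tau}(e^{\alpha\tau}-1)=\alpha e^{\alpha\tau}$ supplies the coefficient $\alpha$, and all powers of $e^\tau$ cancel exactly because $\alpha=\lambda-1$. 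Integrating in $\tau$ from $0$ to $t$ then recovers \eqref{weak_formulation_2}.

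Uniqueness transfers in the same way: any two weak solutions of \eqref{main_rescaled}--\eqref{in_rescaled} with datum $\vv^{\mbox{\rm{\mbox{in}}}}$ would, under the inverse transformation \eqref{alternate_rescaled_function}, produce two weak solutions of \eqref{main_in_nonlinear_operator} with the common datum $\ff^{\mbox{\rm{\mbox{in}}}}$, contradicting the uniqueness in \cite{GJL2024}. I expect the main obstacle to be the rigorous justification of these manipulations rather than their formal validity --- specifically, differentiating the moment integral in $\tau$ under the integral sign, legitimizing the use of the admissible weights $\tf\in\mathcal{T}^{k_*}\cap\mathcal{T}^1$ in the time-dependent weak formulation, and, above all, guaranteeing that every quantity involved remains finite uniformly on the full half-line $[0,\infty)$. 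This last point is precisely where the global moment estimates of \cite{GJL2024}, made available by \eqref{eta_condition_1} and \eqref{eta_condition_2}, are indispensable, since the prefactor $e^{(r-1)\tau}$ with $r>1$ grows in $\tau$ and must be balanced against the decay of the associated moments of $\ff$.
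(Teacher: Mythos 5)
Your overall strategy --- transport the well-posedness theory of \cite{GJL2024} for the original equation \eqref{main_in_nonlinear_operator} to the rescaled problem \eqref{main_rescaled}--\eqref{in_rescaled} via the change of variables \eqref{rescaled_function}, \eqref{alternate_rescaled_function} --- is the right one, and your scaling computations are sound: the moment identity $\mm_r(\vv(\tau))=e^{(r-1)\tau}\mm_r(\ff(e^{\alpha\tau}-1))$, the invariance $\Upsilon_{\tf(\cdot\,e^{\tau})}(x,y)=\Upsilon_{\tf}(X,Y)$ coming from the homogeneity of $\bb$, and the exact cancellation of the exponential factors through $\alpha=\lambda-1$ are all correct, and in fact you make explicit a step that the paper only asserts. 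However, there is a genuine gap at the point where you write that the result of \cite{GJL2024} is ``valid under the standing hypotheses \eqref{eta_condition_1}--\eqref{eta_condition_2} on $\beta$'' and invoke it without further ado. The hypotheses of \cite{GJL2024} are not stated in terms of $\beta'$; they are integrability conditions on the daughter distribution $\bb$ itself, of $L^p$ type, and verifying that \eqref{eta_condition_1} and \eqref{eta_condition_2} imply them is the entire mathematical content of the paper's proof. A telltale sign of the gap is that your argument never actually uses \eqref{eta_condition_2} anywhere.

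Concretely, the paper first deduces from \eqref{eta_condition_2} the pointwise bound
\begin{equation*}
	{\beta}(z_*) = {\beta}(1) - \int_{z_*}^1 {\beta}^\prime(s)\,ds \le [{\beta}(1)+\Xi]\,z_*^{-k_1}, \qquad z_*\in(0,1),
\end{equation*}
and then, setting $p_*:=(k_1+k_*-k_0)/k_1>1$, shows that for every $p\in[1,p_*]$,
\begin{equation*}
	\int_0^{{x}+{y}} {z}^{k_*}\,\bb({z},{x},{y})^p\, d{z} \le [{\beta}(1)+\Xi]^{p-1}\left({x}^{k_*+1-p}+{y}^{k_*+1-p}\right)\int_0^1 z_*^{k_0}{\beta}(z_*)\,dz_*,
\end{equation*}
which is precisely the weighted $L^p$ condition required by \cite{GJL2024} (the interplay between $k_0$, $k_*$ and $k_1$ is what makes $p_*>1$ and hence makes the condition nontrivial). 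Without this verification your appeal to \cite{GJL2024} is unsupported, and the proof is incomplete. Adding this computation before your transfer argument would close the gap and yield a proof that is, if anything, more complete than the paper's, since the paper in turn leaves the change-of-variables transfer implicit.
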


\begin{proof}
	Let $z_* \in(0,1)$ and observe that
	
	\[{\beta}(z_*) = {\beta}(1) - \int_{z_*}^1 {\beta}^\prime(z_*)dz_* \leq [{\beta}(1)+\Xi]z_*^{-k_1}.\]
	
	Next, we fix a value $k_*$ in the range $(k_0,1)$ and define $p_*:= (k_1+k_*-k_0)/k_1$, ensuring $p_* > 1$. For any ${x}\in (0,\infty)$ and $p\in [1, p_*]$, we have
	
	\begin{align*}
		\int_0^{{x}+{y}} {z}^{k_*}\bb({z},{x},{y})^p d{z} &= ({x}^{k_*+1-p}+{y}^{k_*+1-p})\int_0^1 z_*^{k_*} {\beta}(z_*)^p dz_* \\
		&\le [{\beta}(1)+\Xi]^{p-1} ({x}^{k_*+1-p}+{y}^{k_*+1-p})\int_0^1 z_*^{k_0}{\beta}(z_*)dz_*.
	\end{align*}
	
	Therefore, all the required assumptions stated in \cite{GJL2024} are fulfilled and, thus ensures the existence and uniqueness of a mass-conserving solution $\vv \in \mathcal{S}_{k_* ,+}\cap \mathcal{S}_{1+k_*+{\lambda_2}}$ to \eqref{main_in_rescaled} in the sense of Definition \ref{defn:rescaled weak solution}.
\end{proof}

\medskip

Let us establish several estimates that hold for mass-conserving solutions to \eqref{main_rescaled}-\eqref{in_rescaled} under the assumption that the initial conditions satisfy
\begin{align}
	\vv^{\mbox{\rm{\mbox{in}}}} \in \mathcal{S}_{k_0 ,+}\cap \mathcal{S}_{2+{\lambda_2}} \quad \text{and} \quad \mm_1(\vv^{in})=1, \label{in_rescaled_assumption_k_0}
\end{align}
with collisional kernel $\kk$ defined in \eqref{kernel} with ${\lambda_1} \ge k_0$.

Furthermore, we can demonstrate the existence and uniqueness of a mass-conserving solution in this case by utilizing Proposition \ref{wellposedness_evolution_problem}, as $\mathcal{S}_{k_0 ,+}\cap \mathcal{S}_{2+{\lambda_2}} \subset \mathcal{S}_{k_* ,+}\cap \mathcal{S}_{1+k_*+{\lambda_2}}$.

\subsection{Moment estimates}
First, let us define 
\begin{align}
	\Xi_k:=\int_0^1 (Z-Z_*^k){\beta}(Z_*)dZ_*=1-\int_0^1 Z_*^k{\beta}(Z_*)dZ_*, \label{xi_define:eq}
\end{align}
for $k\ge 0$, and note that $\Xi_k>0$ if and only if $k>1$.

\begin{lem} \label{moment_estimates}
	Let us consider the collision kernel $\kk$ as defined in \eqref{kernel} and  $\bb$ satisfies  \eqref{eta_homogeneous_breakage} and  \eqref{eta_local_conservation_no_mass_transfer}, while ${\beta}$ satisfies \eqref{eta_condition_1}. The corresponding solution $\vv$ to  \eqref{main_rescaled}-\eqref{in_rescaled} satisfies the following integrability properties
	
	\begin{enumerate}[label=(\alph*), ref=\thethm (\alph*)]
		\item \label{HME} If $k\ge2$ and $\mm_k(\vv^{\mbox{\rm{\mbox{in}}}})<\infty$, then
		\begin{align}
			\mm_k(\vv(\tau))\le  \max\{M_k(\vv^{\mbox{\rm{\mbox{in}}}}), A_{1,k}\}, \label{lemma_moment estimate_k>1}
		\end{align}
		
		where $A_{1,k}$ is a constant depending only on $\alpha, k,$ and $\Xi_k$.
		
		\item \label{HME (1,2)} If $k\in (1,2)$, then
		\begin{align}
			\mm_k(\vv(\tau))\le  \left( \max\{M_{2+\lambda_2}(\vv^{\mbox{\rm{\mbox{in}}}}), A_{1,2+\lambda_2}\}\right)^{\frac{k-1}{1+\lambda_2}}. \label{lemma_moment estimate_k<2}
		\end{align}

		\item \label{LME} If $k= k_0$, then 
		\begin{align}
			\mm_{k_0}(\vv(\tau)) \le   \max\{M_{k_0}(\vv^{\mbox{\rm{\mbox{in}}}}), A_{2,k_0} \sup_{s\ge0}\{\mm_{1+{\lambda_2}}(\vv(s))^{A_{3,k_0}}\}\},   \label{lemma_moment estimate_k=k_0}
		\end{align}
		where, the constant $A_{2,k}$ depends on ${\lambda_1}, {\lambda_2}, k_0,$ and $\Xi_{k_0}$ and $A_{3,k}$ depends only on ${\lambda_1}, {\lambda_2}$, and $k_0$.
		
	\end{enumerate}
\end{lem}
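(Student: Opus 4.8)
The plan is to test the weak formulation \eqref{weak_formulation_2} with the monomial $\tf(X)=X^k$ and to extract a scalar differential inequality for $y(\tau):=\mm_k(\vv(\tau))$. Since $X\tf'(X)-\tf(X)=(k-1)X^k$ and, by the homogeneity \eqref{eta_homogeneous_breakage} of $\bb$ together with the definition \eqref{xi_define:eq} of $\Xi_k$,
\[
\Upsilon_{X^k}(X,Y)=\Big(\int_0^1 z_*^{k}{\beta}(z_*)\,dz_*\Big)(X^k+Y^k)-X^k-Y^k=-\Xi_k\,(X^k+Y^k),
\]
the collision contribution collapses, after symmetrising and inserting \eqref{kernel}, to $-\alpha\Xi_k L_{k}^{\lambda_1,\lambda_2}(\vv(\tau))$. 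This yields the identity
\[
\frac{d}{d\tau}\,\mm_k(\vv(\tau))=(k-1)\,\mm_k(\vv(\tau))-\alpha\Xi_k\,L_{k}^{\lambda_1,\lambda_2}(\vv(\tau)),
\]
and the three cases are then distinguished by the sign of $\Xi_k$, which by \eqref{xi_define:eq} is positive for $k>1$ and nonpositive for $k\le1$. Because $X^k$ is not itself an admissible test function, I would first carry out this computation with the truncations $\tf_R(X)=\min\{X,R\}^k$, derive the inequality uniformly in $R$, and pass to the limit $R\to\infty$ by monotone convergence and Fatou's lemma; this step simultaneously secures the finiteness and absolute continuity of the moments.

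For part (a), $k\ge2$, one has $\Xi_k>0$, so the collision term is absorbing and the point is to show it dominates the linear growth once $y$ is large. I would bound $L_{k}^{\lambda_1,\lambda_2}(\vv)\ge\mm_{k+\lambda_1}(\vv)\mm_{\lambda_2}(\vv)$ from below by interpolating against the conserved mass $\mm_1(\vv)=1$, namely $\mm_{k+\lambda_1}\ge\mm_k^{(k+\lambda_1-1)/(k-1)}$ and $\mm_{\lambda_2}\ge\mm_k^{-(1-\lambda_2)/(k-1)}$, so that $L_{k}^{\lambda_1,\lambda_2}(\vv)\ge\mm_k^{p}$ with $p=(k+\lambda-2)/(k-1)$. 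The assumption $\lambda>1$ is precisely what forces $p>1$, turning the identity into the superlinear-absorption inequality $y'\le(k-1)y-\alpha\Xi_k y^{p}$, whose solutions never exceed $\max\{y(0),A_{1,k}\}$ with $A_{1,k}=((k-1)/(\alpha\Xi_k))^{1/(p-1)}$; this is \eqref{lemma_moment estimate_k>1}. Part (b), $k\in(1,2)$, is then immediate: apply (a) with the exponent $2+\lambda_2\ge2$ and interpolate $\mm_k\le\mm_1^{\theta}\mm_{2+\lambda_2}^{1-\theta}=\mm_{2+\lambda_2}^{(k-1)/(1+\lambda_2)}$ using $\mm_1=1$, which gives \eqref{lemma_moment estimate_k<2}.

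For part (c), $k=k_0<1$, now $\Xi_{k_0}\le0$ and the collision term is a source, so I would bound $L_{k_0}^{\lambda_1,\lambda_2}(\vv)$ from above. Each moment factor is interpolated so as to minimise its power of $\mm_{k_0}$: a factor of order at most $1$ is controlled through $\mm_{k_0}$ and $\mm_1=1$, whereas a factor of order larger than $1$ is controlled through $\mm_1=1$ and $\mm_{1+\lambda_2}$ and so carries no power of $\mm_{k_0}$. The crucial bookkeeping is that the total power of $\mm_{k_0}$ produced this way is strictly below $1$ (for instance $(2-k_0-\lambda)/(1-k_0)<1$ in the generic configuration), again a consequence of $\lambda>1$, while the remaining powers are absorbed into $\sup_{s\ge0}\mm_{1+\lambda_2}(\vv(s))$. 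This delivers $L_{k_0}^{\lambda_1,\lambda_2}(\vv)\le C\big(\sup_{s\ge0}\mm_{1+\lambda_2}(\vv(s))\big)^{A_{3,k_0}}\mm_{k_0}^{q}$ with $q<1$, whence $y'\le-(1-k_0)y+c\,y^{q}$ with $c=\alpha|\Xi_{k_0}|\,C\,\big(\sup_{s\ge0}\mm_{1+\lambda_2}(\vv(s))\big)^{A_{3,k_0}}$; this sublinear-source ODE keeps $y$ below $\max\{y(0),A_{2,k_0}\sup_{s\ge0}\mm_{1+\lambda_2}(\vv(s))^{A_{3,k_0}}\}$, which is \eqref{lemma_moment estimate_k=k_0}.

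I expect the genuine obstacle to lie in the rigorous derivation of the moment identity rather than in the interpolation estimates: the solution is a priori only in $\mathcal{S}_{k_*}\cap\mathcal{S}_{1+k_*+\lambda_2}$, so the finiteness of the high moments ($k\ge2$) and the justification of differentiating under the integral must be obtained through the $R$-truncation, together with a uniform-in-$R$ control of the resulting remainder before letting $R\to\infty$. The second delicate point is the accounting in (c): one must verify in every admissible configuration of $(\lambda_1,\lambda_2,k_0)$ with $\lambda>1$ that the exponent of $\mm_{k_0}$ stays strictly below one, so that the source ODE is genuinely sublinear and the time-uniform bound \eqref{lemma_moment estimate_k=k_0} follows.
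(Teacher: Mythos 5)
Your proposal is correct and follows essentially the same route as the paper: testing the weak formulation with $\tf(X)=X^k$, reducing the collision term to $-\alpha\Xi_k L_k^{\lambda_1,\lambda_2}(\vv)$, and then using H\"older interpolation against the conserved first moment to obtain a superlinearly absorbing ODE for $k\ge 2$ (with the same exponent $(k+\lambda-2)/(k-1)$), the $\mm_{2+\lambda_2}$-interpolation for $k\in(1,2)$, and a sublinear-source ODE for $k=k_0$. Your additional $R$-truncation of the test function and your explicit flag that the $\mm_{k_0}$-exponent must be checked to stay strictly below one are, if anything, more careful than the paper's treatment of those points.
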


\begin{proof}
	Consider a test function $\tf(X)=X^k$ for $k\ge k_0$ in the weak formulation \eqref{upsilon_rescaled} and using \eqref{xi_define:eq}, we obtain 
	
	\begin{align*}
		\Upsilon_k (X,Y) &= X^k \int_0^1 Z_*^k{\beta}(Z_*)dZ_* + Y^k \int_0^1 Z_*^k {\beta} (Z_*)dZ_* - X^k - Y^k \\
		&= (X^k +Y^k)\bigg( \int_0^1 Z_*^k{\beta}(Z_*)dZ_*-1\bigg) =-\Xi_k (X^k +Y^k),
	\end{align*} 
	for $(X,Y) \in (0,\infty)^2$. From \eqref{kernel}, \eqref{weak_formulation_2} and above identity for $\Upsilon_k$, we get 
	
	\begin{align}
		\frac{d}{d\tau}\mm_k(\vv(\tau)) =& (k-1)\mm_k(\vv(\tau)) - \alpha \Xi_k \int_0^\infty \int_0^\infty X^k \kk(X,Y)\vv(\tau,X) \vv(\tau,Y)dYdX \nonumber \\
		=& (k-1)\mm_k(\vv(\tau)) \nonumber \\
		&- \alpha \Xi_k[\mm_{k+{\lambda_1}}(\vv (\tau))\mm_{\lambda_2}(\vv (\tau))+\mm_{k+{\lambda_2}}(\vv (\tau))\mm_{\lambda_1}(\vv (\tau))]. \label{lemma_moment estimate_ moment equation}
	\end{align}

	\begin{enumerate}[label=(\alph*), ref=\thethm (\alph*)]
		\item Let us first prove the part \ref{HME}. For this, let $k\ge2$. Then, $\Xi_k>0$. On the one hand  ${\lambda_2}<1<k+{\lambda_1}$, we have following estimates with the help of H\"older's inequality
		\begin{align*}
			\mm_1(\vv(\tau))  &\le \mm_{k+{\lambda_1}}(\vv(\tau)) ^{\frac{1-{\lambda_2}}{k+{\lambda_1}-{\lambda_2}}}\mm_{\lambda_2}(\vv(\tau))^{\frac{k-1+{\lambda_1}}{k+{\lambda_1}-{\lambda_2}}}\\			
			{\mm_1(\vv^{\mbox{\rm{\mbox{in}}}})}^{\frac{k+{\lambda_1}-{\lambda_2} }{k-1+{\lambda_1}}} \mm_{k+{\lambda_1}}(\vv(\tau))^{\frac{k-2+{\lambda_1}+{\lambda_2}}{k-1+{\lambda_1}}}
			& \le \mm_{k+{\lambda_1}}(\vv(\tau))\mm_{\lambda_2}(\vv(\tau))\\
			\mm_{k+{\lambda_1}}(\vv(\tau)) 
			& \le  [\mm_{k+{\lambda_1}}(\vv(\tau))\mm_{\lambda_2}(\vv(\tau))]^{\frac{k-1+{\lambda_1}}{k-2+{\lambda_1}+{\lambda_2}}}.
		\end{align*}

		On the other hand ${\lambda_2}< k <k+{\lambda_1}$, we have 
		\begin{align*}
			\mm_k(\vv(\tau))  & \le \mm_{k+{\lambda_1}}(\vv (\tau))^{\frac{k-{\lambda_2}}{k+{\lambda_1}-{\lambda_2}}} \mm_{\lambda_2}(\vv (\tau))^{\frac{{\lambda_1}}{k+{\lambda_1}-{\lambda_2}}}\\			
			&\le [\mm_{k+{\lambda_1}}(\vv(\tau))M_{\lambda_2}(\vv(\tau))]^{\frac{{\lambda_1}}{k+{\lambda_1}-{\lambda_2}}} \mm_{k+{\lambda_1}}^{\frac{k-{\lambda_1}-{\lambda_2}}{k+{\lambda_1}-{\lambda_2}}}(\vv(\tau))\\			
			&\le [\mm_{k+{\lambda_1}}(\vv(\tau))\mm_{\lambda_2}(\vv(\tau))]^{\frac{{\lambda_1}}{k+{\lambda_1}-{\lambda_2}}} [\mm_{k+{\lambda_1}}(\vv(\tau))\mm_{\lambda_2}(\vv(\tau))]^{\frac{k-1+{\lambda_1}}{k-2+{\lambda_1}+{\lambda_2}} \times \frac{k-{\lambda_1}-{\lambda_2}}{k+{\lambda_1}-{\lambda_2}}}\\			
			&\le [\mm_{k+{\lambda_1}}(\vv(\tau))\mm_{\lambda_2}(\vv(\tau))]^{\frac{k-1}{k-2+{\lambda_1}+{\lambda_2}}}.
		\end{align*}

		Similarly, we obtain
		\begin{align*}
			\mm_k(\vv(\tau)) \le  [\mm_{k+{\lambda_2}}(\vv(\tau))\mm_{\lambda_1}(\vv(\tau))]^{\frac{k-1}{k-2+{\lambda_1}+{\lambda_2}}}.
		\end{align*}
		
		Using above estimates in \eqref{lemma_moment estimate_ moment equation}, we obtain
		\begin{align*}
			\frac{d}{d\tau}\mm_k(\vv(\tau))\le (k-1)\mm_k(\vv(\tau))- 2\alpha \Xi_k \mm_k(\vv(\tau))^{{\frac{k-2+{\lambda_1}+{\lambda_2}}{k-1}}},
		\end{align*}
		for $\tau>0$, from which estimate \eqref{lemma_moment estimate_k>1} follows for $k\ge2$.\\

		\item Thanks to \eqref{in_rescaled_assumption_k_0} and H\"older's inequality to extends the above result for $k\in (1, 2)$ and get \eqref{lemma_moment estimate_k<2}.\\
		
		\item 	Now we move on to the case $k=k_0$, since on the one  hand $({\lambda_1},{\lambda_2})\in [k_0,1]^2$, we can use H\"older's inequality to get the following estimates
		$$\mm_{{\lambda_1}} (\vv(\tau))\leq \mm_{k_0} (\vv(\tau))^{\frac{1-{{\lambda_1}}}{1-k_0}}\mm_1(\vv(\tau))^{\frac{{{\lambda_1}}-k_0}{1-k_0}}\leq \mm_{k_0} (\vv(\tau))^{\frac{1-{{\lambda_1}}}{1-k_0}},$$
		
		$$\mm_{{\lambda_2}} (\vv(\tau))\leq \mm_{k_0} (\vv(\tau))^{\frac{1-{{\lambda_2}}}{1-k_0}}\mm_1(\vv(\tau))^{\frac{{{\lambda_2}}-k_0}{1-k_0}}\leq \mm_{k_0} (\vv(\tau))^{\frac{1-{{\lambda_2}}}{1-k_0}}.$$
		
		On the other hand, either $k_0+{{\lambda_1}} \in [k_0,1]$ and
		$$\mm_{k_0+{{\lambda_1}}}(\vv(\tau))\leq \mm_{k_0}(\vv(\tau))^{\frac{1-k_0-{{\lambda_1}}}{1-k_0}}\mm_1(\vv(\tau))^{\frac{{{\lambda_1}}}{1-k_0}}\leq \mm_{k_0}(\vv(\tau))^{\frac{1-k_0-{{\lambda_1}}}{1-k_0}}$$
		
		or $1 \leq k_0+{{\lambda_1}}\leq 1+{\lambda_2}$ and
		\begin{align*}
			\mm_{k_0+{{\lambda_1}}}(\vv(\tau))&\leq \mm_1(\vv(\tau))^{\frac{1+{\lambda_2}-k_0-{\lambda_1}}{{\lambda_2}}}\mm_{1+{\lambda_2}}(\vv(\tau))^{\frac{k_0+{\lambda_1}-1}{{\lambda_2}}}\le \mm_{1+{\lambda_2}}(\vv(\tau))^{\frac{k_0+{\lambda_1}-1}{{\lambda_2}}}.
		\end{align*}

		Thus, we have
		$$\mm_{k_0+{{\lambda_1}}}(\vv(\tau))\leq \mm_{1+{\lambda_2}}(\vv(\tau)))^{\frac{(k_0+{\lambda_1}-1)_+}{{\lambda_2}}} \mm_{k_0}(\vv(\tau))^{\frac{(1-k_0-{{\lambda_1}})_+}{1-k_0}}.$$

		Similarly,
		$$ \mm_{k_0+{{\lambda_2}}}(\vv(\tau))\leq  \mm_{1+{\lambda_2}}(\vv(\tau))^{\frac{(k_0+{\lambda_2}-1)_+}{{\lambda_2}}} \mm_{k_0}(\vv(\tau))^{\frac{(1-k_0-{{\lambda_2}})_+}{1-k_0}}.$$
		
		Summarizing the above inequalities and use in \eqref{lemma_moment estimate_ moment equation}, we get
		\begin{align*}
			\frac{d}{d\tau}\mm_{k_0}(\vv(\tau)) + (1-k_0&)\mm_{k_0}(\vv(\tau)) \\
			\leq \alpha |\Xi_{k_0}| &\bigg[\mm_{1+{\lambda_2}}(\vv(\tau))^{\frac{(k_0+{\lambda_1}-1)_+}{{\lambda_2}}}
			\mm_{k_0}(\vv(\tau))^{\frac{1-{{\lambda_2}} +(1-k_0-{{\lambda_1}})_+}{1-k_0}} \\
			&+ \mm_{1+{\lambda_2}}(\vv(\tau))^{\frac{(k_0+{\lambda_2}-1)_+}{{\lambda_2}}} 
			\mm_{k_0}(\vv(\tau))^{\frac{1-{{\lambda_1}} +(1-k_0-{{\lambda_2}})_+}{1-k_0}}\bigg].
		\end{align*}

		Since $\lambda={{\lambda_1}}+{{\lambda_2}} \in (1,2]$, then 
		\begin{align*}
			\frac{1-{{\lambda_1}}}{1-k_0}+\frac{(1-k_0-{{\lambda_2}})_+}{1-k_0}&=
			\begin{cases}
				\frac{1-{{\lambda_1}}}{1-k_0}\leq 1& ~\text{if}~k_0+{{\lambda_2}}\geq 1\\
				1+\frac{1-\lambda}{1-k_0}\leq 1& ~\text{if}~k_0+{{\lambda_2}} < 1,
			\end{cases}
		\end{align*}
		
		and, similarly, 
		$$\frac{1-{{\lambda_2}}+(1-k_0-{{\lambda_1}})_+}{1-k_0}\leq 1.$$
		
		Using Young's inequality, we obtain
		\begin{align*}
			\frac{d}{d\tau}\mm_{k_0}(\vv(\tau)) &+ \frac{(1-k_0)}{2}\mm_{k_0}(\vv(\tau))\\
			&\leq  C({\lambda_1}, {\lambda_2}, k_0, \Xi_{k_0}) \sup_{s\ge0}\{\mm_{1+{\lambda_2}}(\vv(s))^{A_{3,k_0}} \}, 
		\end{align*}
		for $\tau>0$, which  provides \eqref{lemma_moment estimate_k=k_0}.
	\end{enumerate}
\end{proof}

\subsection{Uniform integrability}
We will now proceed to build a weighted $L^1$-estimate on $\partial_X \vv$.

\begin{lem} \label{derivative_moment_estimate}
	Let us consider the collision kernel $\kk$ as defined in \eqref{kernel} and  $\bb$ satisfies  \eqref{eta_homogeneous_breakage} and  \eqref{eta_local_conservation_no_mass_transfer}, while ${\beta}$ satisfies \eqref{eta_condition_1} and \eqref{eta_condition_2}. Consider an initial condition $\vv^{\mbox{\rm{\mbox{in}}}}$ satisfying \eqref{in_rescaled_assumption_k_0} and such that 
	\begin{align*}
		\vv^{\mbox{\rm{\mbox{in}}}} \in \mathcal{S}_1^1((0,\infty), X^{k_1}dX):=\left\lbrace G\in \mathcal{S}_{k_1} : G^\prime\in \mathcal{S}_{k_1} \right\rbrace ,
	\end{align*}
	where the parameter $k_1\in [1,2)$ is introduced in \eqref{eta_condition_2}. Then the corresponding solution $\vv$ to \eqref{main_rescaled}-\eqref{in_rescaled} belong to $\mathcal{S}_1^1((0,\infty), X^{k_1}dX)$  for all times and 
	\begin{align}
		\mm_{k_1} (|\partial_X \vv(\tau)|) \le \max \bigg\{\mm_{k_1}\left\lvert\left(\vv^{\mbox{\rm{\mbox{in}}}}\right)^{\prime}\right \rvert, A_2\sup_{s\ge 0}\{L_{k_1-1}^{\lambda_1,\lambda_2}(\vv(s))\}\bigg\}, \label{uniform bound on derivative}
	\end{align}
	where, the constant $A_2$ depends on ${\lambda_1}, {\lambda_2}, k_1$ and $\Xi$.
\end{lem}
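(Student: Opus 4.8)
The plan is to derive an evolution equation for $\partial_X \vv$ by differentiating the rescaled equation \eqref{main_rescaled} with respect to $X$, then to estimate the weighted $L^1$-norm $\mm_{k_1}(|\partial_X \vv|)$ by testing against $\sgn(\partial_X \vv) X^{k_1}$. First I would formally differentiate \eqref{main_rescaled}: writing $W:=\partial_X \vv$, the transport part $X\partial_X \vv + 2\vv$ differentiates to $X\partial_X W + 3W$, while on the right-hand side the collisional gain and loss terms produce convolution-type integrals involving $W$ and $\vv$. The key structural feature to exploit is that, because $\bb$ has the homogeneous form \eqref{eta_homogeneous_breakage}, the gain integral $\frac{\alpha}{2}\int_X^\infty\int_0^Y \bb(X,Y-Z,Z)\kk(Y,Z)\vv(Y-Z)\vv(Z)\,dZ\,dY$ can be differentiated in $X$ by moving the $X$-dependence into the $\frac{1}{x}\beta(z/x)$ factor, and this is precisely where the integrability condition \eqref{eta_condition_2} on $\beta'$ enters: after the change of variables $z_* = X/(Y-Z)$ etc., the derivative of $\frac{1}{x}\beta(z/x)$ generates a term containing $\beta'$, whose weighted integral is controlled by $\Xi$.

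Next I would carry out the $L^1$-estimate rigorously. The standard device is to multiply the (mollified) equation for $W$ by $\sgn(W)X^{k_1}$ and integrate over $(0,\infty)$, using $\partial_\tau |W| = \sgn(W)\partial_\tau W$ and the analogous identity for the transport term. The transport contribution $\int_0^\infty \sgn(W)(X\partial_X W + 3W)X^{k_1}\,dX$ integrates by parts to yield a good sign, producing a coefficient of the form $(c - k_1)\mm_{k_1}(|W|)$ with $c$ coming from the $3W$ and boundary-integration terms; the precise constant must be tracked but is routine. For the collisional terms, the loss term $-\alpha\int_0^\infty \kk(X,Y)\vv(Y)\,dY \cdot W$ contributes a term that is sign-definite after multiplication by $\sgn(W)$, giving $-\alpha\int\int \kk(X,Y)|W(X)|\vv(Y)X^{k_1}\,dY\,dX$, which is nonpositive and can be discarded or used to absorb part of the gain term. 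The gain term is the crux: after differentiation and the substitution mentioned above, I expect it to split into one piece controlled by $\int_0^1 z_*^{k_1}|\beta'(z_*)|\,dz_* = \Xi$ times a product of moments, and one piece controlled by $\int_0^1 z_*^{k_0}\beta(z_*)\,dz_*$ (finite by \eqref{eta_condition_1}) times a product of moments. Both products will be bounded by $L_{k_1-1}^{\lambda_1,\lambda_2}(\vv)$, matching the right-hand side of \eqref{uniform bound on derivative}.

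The hard part will be the rigorous justification of differentiating the gain integral in $X$ and controlling the resulting $\beta'$-term, since $\beta$ is only assumed to lie in $L^1$ with $z_*^{k_0}$-weight and $\beta'$ only in $L^1$ with $z_*^{k_1}$-weight — so one cannot differentiate pointwise in $z$ without care near $z_*=0$ and near $z_*=1$ (where the indicator $\mathbf 1_{(0,x)}$ introduces a boundary term $\beta(1)$). I anticipate handling this by integrating by parts in the inner $z$-variable before differentiating in $X$, exactly as in the proof of \Cref{wellposedness_evolution_problem} where the bound $\beta(z_*)\le[\beta(1)+\Xi]z_*^{-k_1}$ is obtained from $\beta(z_*)=\beta(1)-\int_{z_*}^1\beta'\,ds$; this transfers the derivative off $\beta$ and onto test-function factors, leaving integrals of $|\beta'|$ against $z_*^{k_1}$. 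A second technical point is that the formal manipulations must be done on a suitable approximating/mollified sequence (or on the truncated moments used in the existence proof of \cite{GJL2024}) and then passed to the limit; since all the bounds obtained are uniform and the right-hand side of \eqref{uniform bound on derivative} involves $\sup_{s\ge0} L_{k_1-1}^{\lambda_1,\lambda_2}(\vv(s))$, which is finite by the moment bounds of \Cref{moment_estimates}, the Gronwall-type ODE $\frac{d}{d\tau}\mm_{k_1}(|W|) \le (k_1-1)\mm_{k_1}(|W|) - (\text{positive term}) + A_2 \sup_s L_{k_1-1}^{\lambda_1,\lambda_2}(\vv(s))$ closes to give the stated maximum bound and in particular propagates membership in $\mathcal{S}_1^1((0,\infty),X^{k_1}dX)$ for all times.
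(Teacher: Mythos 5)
Your proposal follows essentially the same route as the paper: differentiate \eqref{main_rescaled} in $X$ to get an equation for $V=\partial_X\vv$, test it against $\sgn(V)X^{k_1}$, discard the sign-definite loss contribution $-\alpha\int \kk\,|V|\,\vv\,X^{k_1}$, control the differentiated gain term via $\Xi=\int_0^1 z_*^{k_1}|\beta'(z_*)|\,dz_*$ and the $\partial_X\kk$ term via $\lambda_1,\lambda_2\le 1$, bound both by $L_{k_1-1}^{\lambda_1,\lambda_2}(\vv(\tau))$, and close with an ODE comparison; the technical variations you suggest (integrating by parts in $z$ before differentiating in $X$, working on a mollified approximation) are compatible with this. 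One point to fix: the damping coefficient from the transport term is $(k_1-2)$, not $(k_1-1)$ as in your final differential inequality — since $\partial_X(X\partial_X\vv+2\vv)=X\partial_X V+3V$ and $\int X^{k_1+1}\partial_X|V|\,dX=-(k_1+1)\mm_{k_1}(|V|)$, one gets $\frac{d}{d\tau}\mm_{k_1}(|V|)\le (k_1-2)\mm_{k_1}(|V|)+C\,L_{k_1-1}^{\lambda_1,\lambda_2}(\vv(\tau))$, and it is precisely the strict negativity of $k_1-2$ for $k_1\in[1,2)$ that yields the uniform max-type bound \eqref{uniform bound on derivative}; with $k_1-1\ge 0$ and no coercive term proportional to $\mm_{k_1}(|V|)$ available from the discarded loss term, the Gronwall argument as you wrote it would not close.
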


\begin{proof} 
	First note that  $\vv \in \mathcal{S}_{k_1}$  according to Lemma \ref{moment_estimates} as $k_1 \ge 1$. Next we will show that $\partial_X\vv \in \mathcal{S}_{k_1}$. To this end, It follows from \eqref{main_rescaled} that $V=\partial_X\vv$ solves
	\begin{align*}
		\partial_\tau V(\tau,X)=&-X \partial_X V(\tau,X)-3V(\tau,X)\\
		&-\frac{\alpha}{2}\int_X^\infty \int_0^Y \partial_X \bb(X,Y-Z,Z)K(Y-Z,Z)\vv(\tau,Y-Z)\vv(\tau, Z)dZdY\\
		& -\alpha \int_0^\infty \partial_X \kk(X,Y) \vv(\tau,X)\vv(\tau,Y)dY - \alpha \int_0^\infty \kk(X,Y)V(\tau,X)\vv(\tau,Y)dY
	\end{align*}
	for $(\tau,X)\in (0,\infty)^2$. Then, since $\partial_X|V|=q \partial_X V$  with $q:=\sgn(V)$, we use an integration by parts to obtain
	
	\begin{align*}
		\frac{d}{d\tau}& \mm_{k_1}(|V(\tau)|)= \int_0^{\infty} qX^{k_1}\partial_\tau V(\tau,X)dX \le  (k_1-2) \mm_{k_1}(|V(\tau)|)\\
		&+\frac{\alpha}{2}\int_0^\infty qX^{k_1}\int_X^\infty \int_0^Y \partial_X\bb(X,Y-Z,Z)\kk(Y-Z,Z)\vv(\tau,Y-Z)\vv(\tau,Z)dZdYdX\\
		&-\alpha \int_0^\infty qX^{k_1}\int_0^\infty \partial_X \kk(X,Y)\vv(\tau,X)\vv(\tau,Y)dYdX\\		
		\le&  (k_1-2) \mm_{k_1}(|V(\tau)|)\\
		&+\frac{\alpha}{2}\int_0^\infty qX^{k_1}\int_0^\infty \int_0^{Y+Z} \partial_X\bb(X,Y,Z)\kk(Y,Z)\vv(\tau,Y)\vv(\tau,Z)dXdZdY\\
		&-\alpha \int_0^\infty qX^{k_1}\int_0^\infty \partial_X \kk(X,Y)\vv(\tau,X)\vv(\tau,Y)dYdX\\
		\le&  (k_1-2) \mm_{k_1}(|V(\tau)|)\\
		&+ \alpha \Xi \int_0^\infty \int_0^\infty  Y^{k_1-1} \kk(Y,Z)\vv(\tau,Y)\vv(\tau,Z)dZdY\\
		&+\alpha \int_0^\infty X^{k_1}\int_0^\infty \partial_X \kk(X,Y)\vv(\tau,X)\vv(\tau,Y)dYdX\\		
		\le & (k_1-2) \mm_{k_1}(|V(\tau)|)\\
		&+ \alpha \Xi \int_0^\infty \int_0^\infty  X^{k_1-1} \kk(X,Y)\vv(\tau,X)\vv(\tau,Y)dYdX\\
		&+\alpha \int_0^\infty X^{k_1}\int_0^\infty \partial_X \kk(X,Y)\vv(\tau,X)\vv(\tau,Y)dYdX\\
		\le&  (k_1-2) \mm_{k_1}(|V(\tau)|)\\
		&+ \alpha \max\{\Xi, 2{\lambda_2}\}\int_0^\infty \int_0^\infty  [X^{k_1+{\lambda_1}-1}Y^{\lambda_2}+X^{k_1+{\lambda_2}-1}Y^{\lambda_1}]\vv(\tau,X)\vv(\tau,Y)dYdX\\		
		\le & (k_1-2) \mm_{k_1}(|V(\tau)|)+ C({\lambda_1}, {\lambda_2}, \Xi) L_{k_1-1}^{\lambda_1, \lambda_2}(\vv(\tau)).
	\end{align*}
	
	Since ${\lambda_2}\ge {\lambda_1} \ge k_0$ and $k_1+{\lambda_1}-1\ge k_0$, we can conclude that $L_{k_1-1}^{\lambda_1,\lambda_2}(U(\tau))$ is finite for $\tau\ge 0$ and uniformly bounded by Lemma \ref{moment_estimates}, we get 
	\begin{align*}
		\frac{d}{d\tau} \mm_{k_1}(|V(\tau)|)+ (2-k_1)\mm_{k_1}(|V(\tau)|) \le  C({\lambda_1}, {\lambda_2}, \Xi) \sup_{s\ge 0}\{ L_{k_1-1}^{\lambda_1, \lambda_2}(\vv(s))\},
	\end{align*}
	from which \eqref{uniform bound on derivative} follows.
\end{proof}

\section{Existence by a dynamical approach}
We now provide the proof of \Cref{thm:existence} which relies on a dynamical systems approach.
The proof of \Cref{thm:existence} is not straightforward. Initially, we establish the existence self-similar profile $\eta$ to \eqref{main_in_nonlinear_operator} for daughter distribution functions which satisfy the differentiability estimate \eqref{eta_condition_2} along with the assumption listed in  \Cref{thm:existence}. First let us recall the following theorem from \cite[Proposition~22.13]{amann1990} or \cite[Proof of Theorem~5.2]{gamba2004}.

\begin{thm} \label{thm:DS}
	Let $X$ be a Banach space, $Y$ be a subset of $X$, and $\mathcal{R}:[0,\infty)\times Y\mapsto Y$ be a dynamical system with a positively invariant set $Z\subset Y$ which is a non-empty convex and compact subset of $X$. Then there is $x_0\in Z$ such that $\mathcal{R}(t,x_0)=x_0$ for all $t\ge 0$.
\end{thm}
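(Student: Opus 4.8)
The plan is to reduce the existence of a \emph{common} fixed point for the whole semiflow to the existence of a single fixed point for each time-$s$ map, the latter being supplied by Schauder's fixed point theorem, and then to glue these together using the semigroup structure of $\mathcal{R}$ together with a nesting/compactness argument.

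First I would fix $s>0$ and consider the map $\mathcal{R}(s,\cdot)\colon Z\to Z$. This map is well defined because $Z$ is positively invariant under $\mathcal{R}$, it is continuous because $\mathcal{R}$ is a dynamical system, and $Z$ is a non-empty, convex, compact subset of the Banach space $X$. These are precisely the hypotheses of Schauder's fixed point theorem, so the set
\[
F_s:=\{x\in Z:\ \mathcal{R}(s,x)=x\}
\]
is non-empty. Moreover $F_s$ is closed in $Z$, being the zero set of the continuous map $x\mapsto \mathcal{R}(s,x)-x$, and hence compact as a closed subset of the compact set $Z$.

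The key step is to produce a point lying in $F_s$ for \emph{every} $s$ simultaneously. Here I would exploit the semigroup identity $\mathcal{R}(t+s,\cdot)=\mathcal{R}(t,\mathcal{R}(s,\cdot))$: if $\mathcal{R}(s,x)=x$, then iterating gives $\mathcal{R}(ms,x)=x$ for every integer $m\ge 1$, so that $F_s\subseteq F_{ms}$. Applying this with $s=1/(n+1)!$ and $m=n+1$ and using $n!=(n+1)/(n+1)!$ yields the nesting
\[
F_{1/(n+1)!}\subseteq F_{1/n!},\qquad n\ge 1,
\]
so that $(F_{1/n!})_{n\ge 1}$ is a decreasing sequence of non-empty compact sets. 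By Cantor's intersection theorem the intersection $\bigcap_{n\ge 1}F_{1/n!}$ is non-empty; I pick $x_0$ in it. Then $\mathcal{R}(1/n!,x_0)=x_0$ for every $n$, and one more iteration gives $\mathcal{R}(m/n!,x_0)=x_0$ for all integers $m\ge 0$ and $n\ge 1$.

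Finally I would pass from this dense set of times to all times. Since every non-negative rational can be written as $m/n!$ for suitable $m$ and $n$, the set $\{m/n!\}$ is dense in $[0,\infty)$, and we have just shown that $\mathcal{R}(t,x_0)=x_0$ on it. As $t\mapsto \mathcal{R}(t,x_0)$ is continuous, again because $\mathcal{R}$ is a dynamical system, this equality extends to all $t\ge 0$, giving $\mathcal{R}(t,x_0)=x_0$ as desired (the case $t=0$ being immediate from $\mathcal{R}(0,\cdot)=\mathrm{id}$). I do not anticipate a serious obstacle: the only genuinely substantive input is Schauder's theorem, whose applicability is exactly what the convexity and compactness of $Z$ guarantee, while the remainder is the bookkeeping needed to upgrade a single-time fixed point into one common to all times, for which the factorial nesting is the cleanest device.
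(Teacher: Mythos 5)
Your argument is correct and complete: Schauder's theorem applied to each time-$s$ map $\mathcal{R}(s,\cdot)$ on the non-empty convex compact set $Z$, the factorial nesting $F_{1/(n+1)!}\subseteq F_{1/n!}$ of the compact fixed-point sets via the semigroup property, Cantor's intersection theorem, and density of $\{m/n!\}$ in $[0,\infty)$ together with continuity in $t$. The paper itself gives no proof of this statement --- it is quoted from the cited references (Amann, Proposition~22.13, and Gamba--Panferov--Villani, Theorem~5.2) --- and the argument given there is essentially the one you reconstructed, so your proposal matches the intended proof.
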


\medskip

\begin{prop}\label{prop:existence with derivative}
	Let us consider the collision kernel $\kk$ as defined in \eqref{kernel} with  $\bb$ satisfies  \eqref{eta_homogeneous_breakage} and ${\beta}$  satisfy  \eqref{eta_local_conservation_no_mass_transfer}, \eqref{eta_condition_1} and \eqref{eta_condition_2}.
	Then there exists a self-similar profile $\eta$ to \eqref{main_in_nonlinear_operator} according to the \Cref{defn:weak_solution} satisfying \eqref{thm3_properties} and 
	\begin{align}
		x^{k_0+1}\eta(x)\le \alpha L_1^{\lambda_1,\lambda_2}(\eta) \bigg(\int_0^1 z_*^{k_0}{\beta}(z_*)dz_*\bigg), \quad x\in(0,\infty). \label{thm3_infty_bound}
	\end{align}
\end{prop}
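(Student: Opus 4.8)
The plan is to produce $\eta$ as a stationary point of the rescaled evolution semiflow and then read off its regularity from the alternative representation of \Cref{prop: integrability of sss}. Let $\mathcal R(\tau,\cdot)$ be the solution operator provided by \Cref{wellposedness_evolution_problem}; by the continuity recorded in \Cref{defn:rescaled weak solution} (together with the continuous dependence on the datum inherent in the well-posedness theory) it is a continuous dynamical system on the positive cone of $\mathcal S_{k_0}\cap\mathcal S_{2+{\lambda_2}}$, endowed with the weak topology $\mathcal S_{k_0,w}\cap\mathcal S_{1,w}$. The whole difficulty is then concentrated in exhibiting a non-empty, convex, weakly compact, positively invariant set $Z$, after which \Cref{thm:DS} delivers a fixed point $\eta=\mathcal R(\tau,\eta)$ for all $\tau\ge0$.

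For the invariant set I would take
\[
Z:=\Big\{g\in\mathcal S_{k_0,+}\cap\mathcal S_{2+{\lambda_2}}\cap\mathcal S_1^1((0,\infty),X^{k_1}dX):\ \mm_1(g)=1,\ \mm_{k_0}(g)\le R_0,\ \mm_{2+{\lambda_2}}(g)\le R_1,\ \mm_{k_1}(|g'|)\le R_2\Big\}.
\]
Positive invariance is exactly where \Cref{moment_estimates} and \Cref{derivative_moment_estimate} are used: each of those bounds has the absorbing form $\le\max\{(\text{datum}),A\}$ with $A$ independent of $g$, and the residual $\sup_s$-terms appearing in part (c) of \Cref{moment_estimates} and in \eqref{uniform bound on derivative} are themselves controlled by the higher moments bounded via parts (a)--(b). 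Hence, fixing first $R_1\ge A_{1,2+{\lambda_2}}$, then $R_0$ larger than the resulting constant in \eqref{lemma_moment estimate_k=k_0}, and finally $R_2$ larger than the constant in \eqref{uniform bound on derivative}, yields $\mathcal R(\tau,Z)\subset Z$ for all $\tau$. Convexity is clear, since every constraint is a sublevel set of a convex functional ($g\mapsto\mm_r(g)$ and the seminorm $g\mapsto\mm_{k_1}(|g'|)$) intersected with the affine condition $\mm_1(g)=1$; non-emptiness is obtained by inserting a fixed smooth rapidly decaying probability density (e.g.\ a normalized $x^ae^{-x}$) and, if necessary, enlarging the $R_i$.

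The compactness of $Z$ is the main obstacle. Tightness is routine: $\mm_{2+{\lambda_2}}(g)\le R_1$ with $2+{\lambda_2}>1$ bounds $\int_{\{x>A\}}xg\,dx\le A^{-(1+{\lambda_2})}R_1$, while $\mm_{k_0}(g)\le R_0$ with $k_0<1$ bounds $\int_{\{x<\delta\}}xg\,dx\le\delta^{1-k_0}R_0$, so no mass escapes to the endpoints. The genuinely delicate point is the absence of concentration, and this is precisely what the weighted derivative bound from \Cref{derivative_moment_estimate} secures: $\mm_{k_1}(|g'|)\le R_2$ gives $\int_a^b|g'|\,dx\le a^{-k_1}R_2$ on each compact $[a,b]\subset(0,\infty)$, which controls the $L^1$-modulus of translation and thereby furnishes equi-integrability. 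By the Dunford--Pettis theorem $Z$ is then relatively compact for $\mathcal S_{k_0,w}\cap\mathcal S_{1,w}$; since $\mm_{k_0}$ and $\mm_1$ are weakly continuous while $\mm_{2+{\lambda_2}}$ and $g\mapsto\mm_{k_1}(|g'|)$ are convex and weakly lower semicontinuous, $Z$ is weakly closed, hence weakly compact. Separability makes this topology metrizable on $Z$, so \Cref{thm:DS} applies.

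It remains to interpret the fixed point and to obtain the two quantitative statements. Stationarity of $\eta$ in the weak formulation \eqref{weak_formulation_2} forces the time-integral terms to vanish, which is exactly \eqref{weak_formulation_1}; together with $\mm_1(\eta)=1$ and $\eta\in\mathcal S_{k_0}\cap\mathcal S_{2+{\lambda_2}}$ this identifies $\eta$ as a self-similar profile in the sense of \Cref{defn:weak_solution}. \Cref{prop: integrability of sss} then yields the representation \eqref{alternate_weak_solution}, whose inner integral I would estimate through $z_*\le z_*^{k_0}(x_*/x)^{1-k_0}$ for $0<z_*\le x_*/x\le1$; invoking \eqref{eta_condition_1} collapses the $x$-weight and produces the pointwise bound \eqref{thm3_infty_bound}, giving in particular $\eta\in L_\infty((0,\infty),x^{k_0+1}dx)$. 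Finally, $\eta\in\bigcap_{k\ge k_0}\mathcal S_k$ follows by a bootstrap on the stationary moment identity read off from \eqref{lemma_moment estimate_ moment equation}, namely $\mm_{k+{\lambda_1}}(\eta)\le\frac{k-1}{\alpha\Xi_k\mm_{{\lambda_2}}(\eta)}\mm_k(\eta)$ for $k>1$, which propagates the finiteness of $\mm_{2+{\lambda_2}}(\eta)$ to all higher orders in increments ${\lambda_1}>0$, the intermediate orders being filled in by interpolation with $\mm_{k_0}(\eta)$.
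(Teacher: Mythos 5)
Your proposal is correct and follows essentially the same route as the paper: an invariant, convex, weakly compact set built from the moment bounds of \Cref{moment_estimates} and the weighted derivative bound of \Cref{derivative_moment_estimate}, the fixed-point theorem \Cref{thm:DS} for the rescaled semiflow, and the representation \eqref{alternate_weak_solution} combined with $z_*\le z_*^{k_0}(x_*/x)^{1-k_0}$ to get \eqref{thm3_infty_bound}. The only cosmetic difference is that the paper puts all moments $\mm_k$, $k\ge k_0$, into the invariant set $Z$ at the outset (so the fixed point inherits $\bigcap_{k\ge k_0}\mathcal S_k$ directly), whereas you recover the higher moments afterwards by a bootstrap on the stationary moment identity --- which works but requires a truncation argument to justify testing with $X^k$ before its admissibility is known.
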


\begin{proof} Let $\rho>0$.\\
	\textbf{Step 1: Invariant set.}
	Utilizing estimates established in \Cref{evolution problem}, we define a subset $Z$ of $\mathcal{S}_{1,+}$ as follows: $\zeta \in Z$ if

	\begin{numcases}{}
		\zeta \in  \mathcal{S}_1^1((0,\infty), X^{k_1}dX) \cap \bigcap_{k\ge k_0} \mathcal{S}_k, \\
		\zeta \ge 0  \quad \text{a.e. in}  \quad (0,\infty),\\		
		\mm_1(\zeta)=1 \label{Z mass},\\
		\mm_k(\zeta)\le \sigma_{1,k}:= \max\{A_{1,k}, \Gamma{(1+k)}\} ~~\text{for} ~~k\ge2, \label{Z k>2} \\
		\mm_k(\zeta)\le \sigma_{2,k}:= \max\{\sigma_{1,k}^{\frac{k-1}{1+\lambda_2}}, \Gamma{(1+k)}\} ~~\text{for} ~~1<k<2, \label{Z 1<k<2}\\
		\mm_{k_0} (\zeta)\le \sigma_{3,k_0}:= \sigma_{1,1+\lambda_2}^{A_{3,k_0}}\max\{A_{2,k_0}, \Gamma{(1+k_0)}\},	\label{Z k0}\\
		\mm_{k_1}\big(|\zeta^\prime(X)|\big) \le \sigma_4:=A_2\left[ \sigma_{3,k_0}^{\frac{1-\lambda_2}{1-k_0}+\frac{2+\lambda_2-\lambda_1-k_1}{1+\lambda_2-k_0}} \sigma_{2,1+\lambda_2}^{\frac{\lambda_1+k_1-k_0-1}{1+\lambda_2-k_0}}  
		+ \sigma_{2,k_0}^{\frac{1-\lambda_1}{1-k_0}+\frac{2-k_1}{1+\lambda_2-k_0}} \sigma_{2,1+\lambda_2}^{\frac{\lambda_2+k_1-k_0-1}{1+\lambda_2-k_0}}  \right] \label{Z k1},
	\end{numcases}	
	where, the constants $A_{1,k}$ for $k\ge 2$, $A_{2,k_0}$ and $A_2$ being defined in
	\Cref{moment_estimates} and \Cref{derivative_moment_estimate}. Clearly, $Z$ is a closed convex subset of $\mathcal{S}_{1,+}$ that contains $x \mapsto e^{-x}$ and is compact in $\mathcal{S}_1$ by \cite[Proposition 7.2.2]{bll2019}, given that $k_0 < 1 \leq k_1 < 2$.
	
	Now, consider $\vv^{\mbox{\rm{\mbox{in}}}} \in Z$ and let $\vv$ be the corresponding solution to \eqref{main_in_rescaled}. It readily follows from \eqref{in_rescaled_assumption_k_0} and Lemma \ref{moment_estimates} that 
	\begin{align}
		\mm_1(\vv(\tau))=1 \quad \text{and} \quad \mm_k(\vv(\tau))\le\sigma_{1,k} , \quad k\ge2 \label{Z_1,k_moment_}
	\end{align}
	and 
	\begin{align}
		\mm_k(\vv(\tau))\le\sigma_{2,k} , \quad 1<k<2  \quad \text{and} \quad	\mm_{k_0} (\vv(\tau))\le \sigma_{3,k_0} \label{Z_k_0_moment}
	\end{align}
	for all $\tau\ge 0$.
	
	Since $k_0\le {\lambda_1}\le {\lambda_2} $ and $k_0\le k_1+{\lambda_1}-1\le 1+\lambda_1$, then by utilizing Lemma \ref{derivative_moment_estimate} \eqref{Z_1,k_moment_}, \eqref{Z_k_0_moment}, and H\"older's inequality, we obtain
	\begin{align}
		\mm_{k_1}\big(|\partial_X\vv(\tau)|\big) \le \sigma_4, 
	\end{align}
	for all $\tau\ge 0$.
	
	In summary, we have demonstrated that
	\begin{align}
		\text{if}  ~~\vv^{\mbox{\rm{\mbox{in}}}}\in Z, ~~\text{then} ~~\vv(\tau)\in Z~~~\text{ for all} ~~~\tau\ge 0. \label{IS:eq}
	\end{align}
	\medskip
	
	\textbf{Step 2: Dynamical system in Z equipped with the weak topology of $\mathcal{S}_1$.}
	Let $\left(\vv_i^{\mbox{\rm{\mbox{in}}}} \right)_{i\ge1} $ denote a sequence of initial conditions in $Z$ that weakly converges in $\mathcal{S}_1$ to $\vv^{\mbox{\rm{\mbox{in}}}} \in Z$. For $i \ge 1$, let $\vv$ be the solution to \eqref{main_rescaled} with initial condition $\vv_i^{\mbox{\rm{\mbox{in}}}}$ provided by \Cref{wellposedness_evolution_problem}. According to \eqref{IS:eq}, the set 
	\begin{align}
		\mathcal{E}:=\left\{\vv_i(\tau) : \tau\ge0, i\ge1\right\}
	\end{align}
	is included in $Z$ and thus is sequentially weakly compact in $\mathcal{S}_1$.
	
	Moreover, from \eqref{kernel}, \eqref{eta_homogeneous_breakage}, \eqref{eta_local_conservation_no_mass_transfer}, \eqref{IS:eq}, and \eqref{main_rescaled}, we readily obtain
	\begin{align*}
		\mm_1(|\partial_{\tau}\vv_i(\tau)|) &\le \alpha \int_0^\infty \int_0^\infty (X+Y)\kk(X,Y)\vv_i(\tau,X)\vv_i(\tau,Y)dY dX\\
		&\le 2\alpha \int_0^\infty \int_0^\infty (X+Y)X^{\lambda_1} Y^{\lambda_2}\vv_i(\tau,X)\vv_i(\tau,Y)dY dX\\
		&\le 2\alpha \left[  \mm_{1+\lambda_1}(U_i) \mm_{\lambda_2}(U_i)+\mm_{1+\lambda_2}(U_i) \mm_{\lambda_1}(U_i)\right] \\
		& \le2\alpha[\sigma_{2,1+\lambda_1}\sigma_{3,\lambda_2}+\sigma_{1,1+\lambda_2}\sigma_{3,\lambda_1}]
	\end{align*}
	for $\tau > 0$ and $j \ge 1$. Consequently, the sequence $(\vv_i)_{i\ge1}$  is equicontinuous at each $\tau > 0$ for the norm-topology of $\mathcal{S}_1$ and thus also for the weak topology of $\mathcal{S}_1$. By utilizing the sequential weak compactness of $\mathcal{E}$ in $\mathcal{S}_1$ along with the equicontinuity of $(\vv_i)_{i\ge1}$ and the variant of the Arzel\'a–Ascoli theorem stated in \cite[Theorem 7.1.16]{bll2019}, we infer that there are a subsequence of $(\vv_i)_{i\ge1}$ (not relabeled) and $\vv \in \mathcal{C} ([0, \infty), \mathcal{S}_{1,w} )$ such that
	\begin{align}
		\vv_{i} \rightarrow \vv ~\text{in} ~~ \mathcal{C} ([0, T ], \mathcal{S}_{1,w} ) \label{SSC:eq}
	\end{align}
	for all $T > 0$. Clearly, $\vv(\tau) \in Z$ for all $\tau \ge 0$. Additionally, the estimates \eqref{IS:eq} and the convergence \eqref{SSC:eq} allow us to apply \eqref{wellposedness_evolution_problem}, concluding that $\vv$ is a unique mass-conserving weak solution to \eqref{main_rescaled} on $[0, \infty)$. The convergence \eqref{SSC:eq} holds true for the entire sequence $(\vv_i)_{i\ge1}$. Thus, we have demonstrated that $\{\vv^{\mbox{\rm{\mbox{in}}}} \mapsto \vv(\tau)\}_{t\ge0}$ is a dynamical system on $Z$ equipped with the weak topology of $\mathcal{S}_1$.
	
	\medskip
	
	\textbf{Step 3: Stationary Solution to \eqref{main_rescaled}.}
	In summary, the map $\{\vv^{\mbox{\rm{\mbox{in}}}} \mapsto \vv(\tau)\}_{\tau\ge0}$ is a dynamical system on $Z$ equipped with the weak topology of $\mathcal{S}_1$. It maps $Z$ into itself, making $Z$ a positively invariant set for the dynamical system. Moreover, $Z$ is a closed convex set which is weakly compact in $\mathcal{S}_1$. The existence of a stationary solution $\eta \in Z$ is then a consequence of \Cref{thm:DS}.
	
	\medskip
	
	\textbf{Step 4: Upper bound on scaling profile.}
	To establish \eqref{thm3_infty_bound}, we revisit \Cref{prop: integrability of sss}, which implies that $\eta$ satisfies \eqref{alternate_weak_solution}, i.e.,
	
	\begin{align*}
		x^2_*\eta(x_*)&={\alpha}\int_{x_*}^\infty \int_0^\infty \int_0^{x_*/ {x}} z_*{\beta}(z_*) {x} \kk({x},{y})\eta({x})\eta({y})dz_* d{y} d{x}  \qquad x_*\in (0,\infty). 
	\end{align*}
	
	Since 
	\begin{align*}
		\int_0^{x_*/{x}}z_*{\beta}(z_*)dz_*\le \left(\frac{x_*}{{x}}\right)^{1-k_0}\int_0^1 z_*^{k_0}{\beta}(z_*),
	\end{align*}
	we deduce that $\eta$ satisfies 
	
	\begin{align*}
		x_*^{k_0+1}\eta(x_*)\le \alpha L_{1}^{\lambda_1,\lambda_2}\int_0^1 z_*^{k_0}{\beta}(z_*),
	\end{align*}
	
	and \eqref{thm3_infty_bound} follows.
	
\end{proof}


We are now in a position to prove \Cref{thm:existence} by eliminating the differentiability requirement on ${\beta}$ defined in \eqref{eta_condition_2} with the help of an approximation procedure.

\begin{proof}[Proof of the \Cref{thm:existence}]
	Firstly, we construct a suitable approximation for ${\beta}$. Let $\phi \in C_c^{\infty}(\mathbb{R})$ be an even, nonnegative function that satisfies
	\begin{align*}
		\int_\mathbb{R} \phi(r)dr=1 \quad \text{and} \quad \text{supp}~\phi \subset (-1,1).
	\end{align*}
	
	For $\delta \in (0,1)$, $r\in \mathbb{R}$, and $z_*\in(0,1)$, we define $\phi_\delta=\delta^{-2}\phi(r \delta^{-2})$, and introduce
	\begin{align*}
		\Phi_\delta:=\int_0^1 z_*\int_\delta^1 \phi_\delta(z_*-z_{**}){\beta}(z_{**})dz_{**}dz_*\\
		{\beta}_\delta(z_*):=\frac{1}{\Phi_\delta}\int_\delta^1\phi_\delta(z_*-z_{**}){\beta}(z_{**})dz_{**}.
	\end{align*}
	
	Then, we have
	\begin{align}
		\lim_{\delta \to 0}\Phi_\delta=1, \quad \lim_{\delta \to 0}\int_0^1 z_*^k |{\beta}(z_*)-{\beta}_\delta(z_*)|dz_*=0, ~~k\ge k_0,  \label{convergence_eta epsilon}
	\end{align}
	and
	\begin{align*}
		{\beta}_\delta^\prime(z_*) \in L^1((0,1),z_*^{k_1}dz_*), \qquad k_1:=1+k_0\in[1,2).
	\end{align*}
	
	Consequently, we conclude that $\kk$ and $\bb_\delta$ defined as
	\begin{align}
		\bb_\delta ({z},{x},{y})=\frac{1}{{x}}\beta_\delta \left( \frac{{z}}{{x}}\right) \textbf{1}_{(0,{x})}({z})+\frac{1}{{y}}\beta_\delta \left( \frac{{z}}{{y}}\right)\textbf{1}_{(0,{y})}({z}),  \label{eq:approx breakage function}
	\end{align}
	satisfy \eqref{no_masstransfer}, \eqref{eta_homogeneous_breakage}, \eqref{eta_local_conservation_no_mass_transfer}, \eqref{eta_condition_1}, and \eqref{eta_condition_2}, for each $\delta \in (0,1)$.

	Now, for each $\delta\in(0,1)$, we can assert the existence of a solution $\eta_\delta$ to \eqref{sss_main_omega_and_mass=1}, where $\bb_\delta$ replaces $\bb$, which is nonnegative and satisfies \eqref{wf1_sss_mass_conservation} as per \Cref{prop:existence with derivative}.
	
	Let us now establish that
	\begin{align*}
		\Omega :=\left\{\eta_\delta : \delta \in (0,1)\right\}
	\end{align*}
	relatively sequentially weakly compact in $\mathcal{S}_1$.
	Using \eqref{convergence_eta epsilon}, we deduce that the family $(\Xi_{k,\delta})$ defined by
	\begin{align}
		\Xi_{k,\delta}:=\int_0^1 (z-z_*^k){\beta_\delta}(z_*)dz_*, \label{xi_define with epsilon:eq}
	\end{align}
	is bounded both from above and below by positive constants depending solely on $k$ and ${\beta}$ for all $k\ge k_0$ and there exist a positive constant $C_{k,\beta}$  depending solely on $k$ and ${\beta}$ for all $k\ge k_0$ such that 
	\begin{align}
		\int_0^1 z_*^{k_0}{\beta}_\delta(z_*)dz_* \le C_{k,\beta},
	\end{align}
	Combining this observation with \eqref{Z k>2}, \eqref{Z 1<k<2}, and \eqref{Z k0}, we infer that the family $\Omega$
	is bounded in $\mathcal{S}_k$ for all $k\ge k_0$. Hence there exist a positive constant $\ell^{\lambda_1,\lambda_2}$ depending on $\lambda_1$ and $\lambda_2$ such that
	\begin{align}
		L_1^{\lambda_1,\lambda_2}(\eta_\delta)\le \ell^{\lambda_1,\lambda_2}.
	\end{align}
	To this end, observe that a further consequence of \eqref{thm3_infty_bound}, that the family $\Omega$ is bounded in $L_\infty((0,\infty),x^{k_0+1}dx)$ and
	
	\begin{align}
		x^{k_0+1}\eta_\delta (x)\le \alpha L_1^{\lambda_1,\lambda_2}(\eta_\delta) \bigg(\int_0^1 z_*^{k_0}{\beta}_\delta(z_*)dz_*\bigg) \le \alpha  \ell^{\lambda_1,\lambda_2} C_{k,\beta} , \quad x\in(0,\infty). \label{eq:infty bouned estimate eta delta}
	\end{align}
	
	Consequently, given a measurable set $E\subset(0, \infty)$, $m \in (0, 1)$ and $M > 1$, we infer from \eqref{eq:infty bouned estimate eta delta} that, for $\delta\in(0,1)$,
	\begin{align*}
		\int_E x \eta_\delta(x)dx  &\le \int_0^m x\eta_\delta(x)dx +\int_m^M \textbf{1}_E(x)x\eta_\delta dx +\int_M^\infty x\eta_\delta (x)dx\\
		&\le \alpha  \ell^{\lambda_1,\lambda_2} C_{k,\beta} \left[ \frac{m^{1-k_0}}{1-k_0}+ \frac{1}{m^{k_0}}|E| + \frac{1}{M^{k_0}}\right].
	\end{align*}
	Therefore the modulus of uniform integrability 
	\begin{equation*}
		\mu\{\Omega;\mathcal{S}_1\} := \lim_{\varepsilon \to 0} \sup\bigg\{ \int_E x\eta_\delta(x) dx  : \eta_\delta\in \Omega, \delta\in(0,1), E\subset(0,\infty), |E|\leq \varepsilon\bigg\}
	\end{equation*}
	of $\Omega$ in $\mathcal{S}_1$ satisfies
	\begin{equation*}
		\mu\{\Omega;\mathcal{S}_1\}\leq \alpha  \ell^{\lambda_1,\lambda_2} C_{k,\beta} \left[ \frac{m^{1-k_0}}{1-k_0}+ \frac{1}{M^{k_0}}\right] 
	\end{equation*}
	for all $0<m<1$ and $ M>1$, we may let $m \to 0$ and  $M \to \infty$ in the previous inequality to obtain that
	\begin{equation}
		\mu\{\Omega;\mathcal{S}_1\}=0.
	\end{equation}
	Also,  arguing as above gives
	\begin{equation}
		\lim_{M \to \infty} \sup_{\delta\in (0,1)} \Big\{\int_{M}^{\infty}x \eta_\delta(x)dx \Big \}=0, \label{eq:cw2}
	\end{equation}
	and we have thus proved the relative sequential weak compactness of $\Omega$ in $\mathcal{S}_1$ by the Dunford-Pettis theorem \cite[Theorem 7.1.3]{bll2019}. Therefore, there exists a sequence $(\delta_j)_{j\ge 1}$ and $\eta\in \mathcal{S}_1$ such that 
	\begin{align}
		\lim_{j \to \infty} \delta_j=0 \quad \text{and} \quad \eta_{\delta_j} \rightharpoonup \eta \quad  \text{in} \quad \mathcal{S}_1 \quad \text{as} \quad  j \to \infty.  \label{convergence phi}
	\end{align}
	With this convergence and the properties of $\Omega$, we deduce that $\eta \in \mathcal{S}_{1,+}$ with $M_1(\eta)=1$ and 
	\begin{align}
		\eta \in L^{\infty}((0,\infty),x^{k_0+1}dx) \cap \bigcap_{k\ge k_0} \mathcal{S}_k.
	\end{align}
	
	\medskip
	
	We need to verify that $\eta$ is a weak solution to \eqref{sss_main_omega_and_mass=1}. For this purpose, we consider $\tf \in \mathcal{T}^1$ and recall the relation
	\begin{align}
		\int_{0}^{\infty}[x\tf^\prime({x})-\tf(x)]\eta_{\delta_j}(x)d{x} =-\frac{\alpha}{2}\int_{0}^{\infty}\int_{0}^{\infty}\Upsilon_{\tf}^{\delta_j}({x},{y})\kk({x},{y})\eta_{\delta_j} ({x})\eta_{\delta_j} ({y})d{y} d{x}, \label{eq: approx sss wf}
	\end{align}
	where $\Upsilon_{\tf}^{\delta_j}$ is defined as
	\begin{align*}
		\Upsilon_{\tf}^{\delta_j}({x},{y}):=\int_{0}^{{x}+{y}}\tf({z})\bb_{\delta_j} ({z},{x},{y})d{z}-\tf({x})-\tf({y}).
	\end{align*}
	
	Firstly, we estimate $\Upsilon_{\tf}^{\delta_j}$ using \eqref{eq:approx breakage function}:
	\begin{align}
		\Upsilon_{\tf}^{\delta_j}({x},{y})&=\int_{0}^{{x}}\frac{1}{x}\tf({z}){\beta}_{\delta_j}\bigg({\frac{{z}}{{x}}}\bigg)d{z}-\tf({x})
		+\int_{0}^{{y}}\frac{1}{x}\tf({z}){\beta}_{\delta_j}\bigg({\frac{{z}}{{y}}}\bigg)d{z}-\tf({y}) \nonumber\\
		&=\int_{0}^{1}\tf({x} z_*){\beta}_{\delta_j}(z_*)dz_*-\tf({x})
		+\int_{0}^{1}\tf({y} z_*){\beta}_{\delta_j}(z_*)dz_*. \label{eq: estimate of approx upsilon}
	\end{align}
	
	Inserting \eqref{eq: estimate of approx upsilon} into \eqref{eq: approx sss wf}, we obtain
	\begin{align*}
		\int_{0}^{\infty}[\tf(x)-x\tf^\prime({x})]\eta_{\delta_j}(x)d{x} &+\int_0^\infty \int_0^\infty\tf(x)\kk({x},{y})\eta_{\delta_j} ({x})\eta_{\delta_j} ({y})d{y} d{x}\\ &=\alpha\int_{0}^{\infty}\int_{0}^{\infty}\int_{0}^{1}\tf({x} z_*){\beta}_{\delta_j}(z_*)\kk({x},{y})\eta_{\delta_j} ({x})\eta_{\delta_j} ({y})d z_* d{y} d{x}.
	\end{align*}
	
	Introducing
	\begin{align*}
		B_j({x}):=\frac{1}{{x}}\int_0^1 \tf({x} z_*){\beta}_{\delta_j}(z_*)dz_*, \qquad {x} \in (0,\infty),
	\end{align*}
	we use \eqref{eta_local_conservation_no_mass_transfer}, \eqref{convergence_eta epsilon}, and the properties of $\tf$ to show that
	\begin{align*}
		|B_j({x})|\le ||\tf^\prime(x)||_\infty \quad  \text{and} \quad  \lim_{j \to \infty}B_j(x) =\frac{1}{{x}}\int_0^1 \tf({x}, z_*){\beta}(z_*)dz_*.
	\end{align*}
	
	Now, with the help of \eqref{convergence phi}, we can apply \cite[Proposition 7.1.12]{bll2019} to deduce
	\begin{align*}
		&\lim_{j \to \infty} \alpha\int_{0}^{\infty}\int_{0}^{\infty} {x} B_j({x}) \kk({x},{y})\eta_{\delta_j} ({x})\eta_{\delta_j} ({y})d z_* d{y} d{x}\\
		&=\alpha\int_{0}^{\infty}\int_{0}^{\infty}\int_{0}^{1}\tf({x} z_*){\beta}_{\delta_j}(z_*)\kk({x},{y})\eta_{\delta_j} ({x})\eta_{\delta_j} ({y})d z_* d{y} d{x}.
	\end{align*}
	
	Therefore, the proof of \Cref{thm:existence} is complete.
\end{proof}

\section{Strict positiveness and lower bound of the scaling profile}
In this section, we consider the collision kernel $\kk$ and the breakage function $\bb$ that satisfy \eqref{kernel}, \eqref{eta_homogeneous_breakage}, \eqref{eta_local_conservation_no_mass_transfer}, and {\eqref{eta_condition_1}}. Let $\eta$ be a mass-conserving self-similar profile for the collision-induced breakage equation \eqref{main_in_nonlinear_operator}, following the definition provided in \Cref{defn:weak_solution}.

As previously mentioned, the proof of \Cref{thm:properties_sss} is adapted from \cite[Proposition 10.1.12]{bll2019}, and the initial step is to establish that $\eta^\prime \in \mathcal{S}_2$.

\begin{lem}\label{lem:negativemoments}
	Consider a mass-conserving self-similar profile $\eta$ for the collision-induced breakage equation \eqref{main_in_nonlinear_operator}, as constructed in \Cref{thm:existence}. Then
	\begin{equation}
		\eta^\prime \in \mathcal{S}_2.
	\end{equation}
\end{lem}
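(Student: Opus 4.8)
The plan is to start from the alternative integral representation \eqref{alternate_weak_solution} furnished by \Cref{prop: integrability of sss}, in which the variable $x_*$ enters only through the lower limit of the outer integral and the upper limit $x_*/x$ of the innermost integral. Writing $\Theta(x):=x\eta(x)\int_0^\infty \kk(x,y)\eta(y)\,dy$, the explicit form \eqref{kernel} of $\kk$ gives $\int_0^\infty \Theta(x)\,dx=L_1^{\lambda_1,\lambda_2}(\eta)=\mm_{1+\lambda_1}(\eta)\mm_{\lambda_2}(\eta)+\mm_{1+\lambda_2}(\eta)\mm_{\lambda_1}(\eta)$, which is finite since by \Cref{thm:existence} one has $\eta\in\bigcap_{k\ge k_0}\mathcal{S}_k$ and $k_0\le\lambda_1\le\lambda_2\le 1$. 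Because \Cref{prop: integrability of sss} also provides $\eta\in\mathcal{C}((0,\infty))$, the right-hand side of \eqref{alternate_weak_solution}, and hence $x_*\mapsto x_*^2\eta(x_*)$, is locally absolutely continuous on $(0,\infty)$.

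First I would differentiate \eqref{alternate_weak_solution} in $x_*$. The boundary contribution of the outer lower limit carries the factor $\int_0^1 z_*\beta(z_*)\,dz_*=1$ coming from \eqref{eta_local_conservation_no_mass_transfer}, so it equals $\alpha\,\Theta(x_*)$, while differentiation of the inner upper limit produces the density $\tfrac{x_*}{x^2}\beta(x_*/x)$. Subtracting the trivial term $2x_*\eta(x_*)$ then yields, for a.e. $x_*>0$,
\begin{equation*}
	x_*^2\eta'(x_*)=-2x_*\eta(x_*)-\alpha\,\Theta(x_*)+\alpha\int_{x_*}^\infty \frac{x_*}{x^2}\,\beta\!\left(\frac{x_*}{x}\right)\Theta(x)\,dx .
\end{equation*}
This is of course the strong form of the self-similar profile equation, but deriving it from \eqref{alternate_weak_solution} supplies the (weak) differentiability of $\eta$ at the same time, which is what makes the statement $\eta'\in\mathcal{S}_2$ meaningful.

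Finally I would estimate $\int_0^\infty|x_*^2\eta'(x_*)|\,dx_*$ by the sum of the $L^1$-norms of the three nonnegative pieces above. The first integrates to $2\mm_1(\eta)=2$ by \eqref{wf1_sss_mass_conservation}, and the second to $\alpha\int_0^\infty\Theta(x)\,dx=\alpha L_1^{\lambda_1,\lambda_2}(\eta)$. For the third, Tonelli's theorem on the region $\{0<x_*<x\}$ together with the substitution $z_*=x_*/x$ and \eqref{eta_local_conservation_no_mass_transfer} give $\int_0^x x_*\beta(x_*/x)\,dx_*=x^2\int_0^1 z_*\beta(z_*)\,dz_*=x^2$, so this piece also equals $\alpha\int_0^\infty\Theta(x)\,dx=\alpha L_1^{\lambda_1,\lambda_2}(\eta)$. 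Altogether $\int_0^\infty x^2|\eta'(x)|\,dx\le 2+2\alpha L_1^{\lambda_1,\lambda_2}(\eta)<\infty$, i.e. $\eta'\in\mathcal{S}_2$. The step that needs care is the differentiation of \eqref{alternate_weak_solution}: one must justify differentiating simultaneously through the moving outer limit and the $x_*$-dependence inside the inner integral (a Leibniz rule with an absolutely continuous parameter), and it is precisely the continuity of $\eta$ and the dominating bound $\Theta\in L^1(0,\infty)$ established in the first paragraph that make this legitimate.
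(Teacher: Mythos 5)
Your proposal is correct and follows essentially the same route as the paper: differentiate the integral identity \eqref{alternate_weak_solution}, use the normalization \eqref{eta_local_conservation_no_mass_transfer} together with Fubini--Tonelli to bound the resulting pieces by $\alpha L_1^{\lambda_1,\lambda_2}(\eta)$ each, and conclude $\int_0^\infty x_*^2|\eta'(x_*)|\,dx_*\le 2+2\alpha L_1^{\lambda_1,\lambda_2}(\eta)<\infty$. The only cosmetic difference is that the paper first bounds $\int_0^\infty|(x_*^2\eta)'|\,dx_*$ and then adds $2\mm_1(\eta)$, whereas you rearrange to isolate $x_*^2\eta'$ directly; the resulting bound is identical.
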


\begin{proof}
	From \eqref{eta_homogeneous_breakage} and \eqref{alternate_weak_solution}, we deduce that
	\begin{align*}
		\left(x_*^2\eta(x_*)\right)^\prime=& -\alpha \int_0^\infty x_* \kk(x_*,{y})\eta(x_*)\eta({y})d{y}\\
		&+ \alpha \int_{x_*}^\infty \int_0^\infty \frac{x_*}{{x}} \beta\left( \frac{x_*}{{x}}\right)  \kk({x},{y}) \eta({x}) \eta({y})d{y} d{x}.
	\end{align*}
	This expression holds for almost every $x_*\in(0,\infty)$. Utilizing \eqref{wf1_sss_mass_conservation}, \eqref{eta_local_conservation_no_mass_transfer}, and the Fubini-Tonelli theorem, we can conclude that
	\begin{align*}
		\int_0^\infty \left \lvert \left(x_*^2\eta(x_*)\right)^\prime\right \rvert dx_* \le 2\alpha L_1^{\lambda_1,\lambda_2}(\eta)<\infty.
	\end{align*}
	Hence, we obtain the inequality
	\begin{align*}
		\int_0^\infty {x_*^2 }\left \lvert 	\eta^\prime(x_*) \right\rvert dx_* &=	\int_0^\infty \left\lvert \left(x_*^2\eta(x_*)\right)^\prime\right \rvert dx_*+ 2\int_0^\infty x_* \eta(x_*)dx_*\\
		&\le 2(\alpha L_1^{\lambda_1,\lambda_2}(\eta)+ 1)<\infty.
	\end{align*}
\end{proof}
\medskip


Now, let us establish the lower bound on the scaling profile $\eta$ as stated in Theorem \ref{thm:properties_sss}.
\begin{proof}[Proof of Theorem \ref{thm:properties_sss}]
	By utilizing \eqref{eta_homogeneous_breakage}, \eqref{eta_local_conservation_no_mass_transfer}, and \eqref{alternate_weak_solution}, we obtain
	\begin{align}\label{thm4_1}
		\left(x_*^2 e^{\alpha g(x_*)}\right)^\prime= \alpha e^{\alpha g(x_*)}\int_{x_*}^\infty \int_0^\infty \frac{x_*}{{x}} \beta\left( \frac{x_*}{{x}}\right)\eta({x})\eta({y}) \ge 0,
	\end{align}
	for almost every $x_*\in (0,\infty)$, where $g(x)$ is defined by \eqref{g(x)}. Given that $\eta$ is not identically zero due to \eqref{wf1_sss_mass_conservation}, there exists $x_0 \in (0,\infty)$ such that $\eta(x_0) > 0$. By examining \eqref{thm4_1}, we observe that
	\begin{align}\label{thm4_2}
		x_*^2e^{\alpha g(x_*)} \eta(x_*) \ge  x_0^2e^{\alpha g(x_0)} \eta(x_0) >0,
	\end{align}
	whenever $x_* \in (x_0 , \infty)$.
	
	Moreover, assuming, for contradiction, that there exists $x_1 \in (0, x_0)$ such that $\eta(x_1) = 0$ and $\eta(x_*) > 0$ for $x_* \in (x_1,x_0]$, we can derive from \eqref{alternate_weak_solution} the following result
	\begin{align*}
		\int_{x_1}^\infty \int_0^\infty \int_0^{x_1/ {x}} z_*{\beta}(z_*) {x} \kk({x},{y})\eta({x})\eta({y})dz_* d{y} d{x} = 0.
	\end{align*}
	Considering the positivity of $\kk$ and ${\beta}$, this identity implies that $\eta(x_*) = 0$ for $x_* \in (x_1, \infty)$, contradicting the definition of $x_1$. Hence, we conclude that $\eta$ is positive in $(0, x_0)$ and, consequently, in $(0, \infty)$ based on \eqref{thm4_2}. Finally, integrating \eqref{thm4_1} over the interval $(x_*,r)$, we obtain \eqref{lower bound on phi}.
\end{proof}

\textbf{Acknowledgments.} 
	This research received financial support from the Indo-French Centre for Applied Mathematics (MA/IFCAM/19/58) as part of the project ``Collision-induced breakage and coagulation: dynamics and numerics." The authors would also like to acknowledge Council of Scientific \& Industrial Research (CSIR), India for providing a PhD fellowship to RGJ through Grant 09/143(0996)/2019-EMR-I. Additionally, authors extend their appreciation to Prof. Philippe Lauren\c cot for valuable discussions.

	\bibliography{Refs.bib}
\bibliographystyle{abbrv}

\end{document}